\DeclareMathOperator*{\argmax}{arg\,max}
\DeclareMathOperator*{\argmin}{arg\,min}
\pgfplotsset{compat=newest}
\definecolor{red}{rgb}{0.7,0.15,0.15}
\definecolor{green}{rgb}{0,0.5,0}
\definecolor{blue}{rgb}{0,0,0.7}
\makeatletter \@addtoreset{equation}{section}
\newcommand{\ubar}[1]{\text{\b{$#1$}}}
\newtheorem{theorem}{Theorem}
\newtheorem{theorem2}{Theorem}[section]
\newtheorem*{assumption_sig*}{Assumption $\Ac_\sigma$}
\newtheorem*{assumption_lamb*}{Assumption $\Ac^\lambda$}
\newtheorem*{assumption_max*}{Assumption $\Ac^\text{max}$}
\newtheorem*{assumption_fix*}{Assumption $\Ac^\text{fix}$}
\newtheorem{lemma}[theorem2]{Lemma}
\newtheorem{proposition}[theorem2]{Proposition}
\newtheorem{definition}[theorem2]{Definition}
\newtheorem{remark}[theorem2]{Remark}
\newcommand{\comment}[1]{}
\def \E{\mathbb{E}}
\def \N{\mathbb{N}}
\def \P{\mathbb{P}}
\def \R{\mathbb{R}}
\def\Ac{{\cal A}}
\def\Dc{{\cal D}}
\def\Fc{{\cal F}}
\def\Hc{{\cal H}}
\def\Sc{{\cal S}}
\title{On Z-mean reflected BSDEs}
\author{Joffrey Derchu\footnote{\'Ecole Polytechnique, CMAP; joffrey.derchu@polytechnique.edu},~ Thibaut Mastrolia\footnote{Department of Industrial Engineering and Operations Research, University of California, Berkeley; mastrolia@berkeley.edu}}
\begin{document}

\maketitle
\begin{abstract}
In this paper we provide conditions for the existence of supersolutions to BSDEs with mean-reflections on the $Z$ component. We show that, contrary to BSDEs with mean-reflections on the $Y$ component, we cannot expect a supersolution with a deterministic increasing process $K$. Nonetheless, we give conditions for the existence of a supersolution for a stochastic component $K$ and under various constraints. We formalize some previous arguments on the time-inconsistency of such problems, proving that a minimal supersolution is necessarily a solution in our framework.\\

\noindent{\bf Keywords: Constrained BSDEs, Malliavin calculus}.
\end{abstract}

\section{Introduction}
The aim of this paper is to discuss the existence of supersolutions 
$(Y,Z,K)$ of a backward stochastic differential equation (BSDE in short) of the form
\begin{equation}\label{eq::bsde0}
    Y_t = \xi+\int_t^Tf(s,Y_s,Z_s)ds-\int_t^T Z_s dB_s+K_T-K_t, 0\leq t\leq T
\end{equation}
under \textit{weak} constraints on $Z$, where $B$ is a one-dimensional Brownian motion, and $\xi$ and $f$ are potentially non-Markovian. Backward stochastic differential equations were first introduced by \cite{PARDOUX199055,pardoux1992backward} in a Markovian setting to investigate its  connections with viscosity solutions of  non-linear PDE, and were subsequently extended to a non-Markovian framework and used for the valuation of contingent claims in complete markets in \cite{elkpengquenez}, \cite{antonelli}, or in incomplete markets, see \cite{elkquenez_incomplete}, \cite{foellmer}, \cite{Ansel1994CouvertureDA}, \cite{jacka}. These works was then extended to the problem of hedging a claim under almost-sure constraints on the portfolio. In \cite{cvitanic_karatzas}, the authors showed that the hedging price of the claim under convex constraints is given, if it exists, by a minimal supersolution of \eqref{eq::bsde0}. Also, other works (\cite{Peng1999MonotonicLT}, \cite{cvitanic}, \cite{Peng2007ConstrainedBA}, \cite{peng}) show that such a minimal supersolution can be well approximated by a sequence of penalized unconstrained BSDEs, if a supersolution exists. We can also cite the relatively similar approach used to deal with optimal stopping problems and the pricing of American options via reflected BSDEs, see \cite{elk_kapo}, \cite{ESSAKY2011442}, \cite{hamadenelepeltier}.\\
Nevertheless, few papers actually deal with the existence of a supersolution under those constraints. Some interesting works related to this problem include \cite{bouchard:hal-01065794}, \cite{bouchard2018bsde}. The first paper (\cite{bouchard:hal-01065794}) shows existence of supersolutions if $\phi$ is upper bounded and if the constraint can be written as $Z_t\in K$ for $K$ a convex set of $\R$ containing $0$, by relying on a dual formulation of the problem, while \cite{bouchard2018bsde} shows that, if the constraints are of the form $\ell(Z_t)\leq 0$ with $\ell$ the supremum of linear functions, then the existence of a supersolution can be deduced if some particular singular control problem has a value. By and large the existence of a supersolution is not easy to obtain, and we give in this paper some examples of natural problems in which existence does not hold and penalization methods thus yield diverging results.\\
A possible approach is to relax the constraints by considering constraints in expectation. This idea has been used in \cite{briandElieHu} and \cite{briandCard} for constraints on the law of $Y$ to deal with problems of hedging with constraints on a risk measure or with dynamical systems with constraints on a large number of particles.\\

We observe three main approaches in the literature to study mean-reflected BSDE theory (with constraints on the distribution of $Y$). 
\begin{enumerate}
    \item[(i)] First, fixed point arguments can be used when the process $Z$ plays no role in the generator and the problem can be reformulated as a stopping problem (\cite{djehiche2019meanfield}, \cite{chen2020meanfield}). 
    \item[(ii)] Second, a penalization scheme can be used, but requires some monotonicity property (see also \cite{djehiche2019meanfield}, \cite{briandElieHu}, \cite{chen2020meanfield}).
    \item[(iii)] Third, some papers restrain the problem to deterministic $K$, as in \cite{briandElieHu} or \cite{briandCard}.
\end{enumerate}  

In this paper we address the problem of solving \eqref{eq::bsde0} with law constraints on $Z$ of the form $\mathbb E[\ell(t,Z_t)]\geq 0$ for some constraint function $\ell$. Beyond the particular case (i) implying no dependence of the $Z$ process in the generator of the BSDE, we show in this paper that we cannot follow directly the two other approaches (ii) and (iii). It requires to develop another method to investigate the existence of supersolutions to $Z$-mean reflected BSDE. The approach used in the present paper is based on Malliavin calculus and aims at solving the $Z$-mean reflected BSDE in a smooth class of for the component $K$, which will have a stochastic exponential form.\\

The structure of the paper is the following. We first state the definitions and notations in Section \ref{sec::Prelem}. Then, we turn to the existence of supersolutions to BSDE \eqref{eq::bsde0} with $Z$-mean reflection $\mathbb E[\ell(t,Z_t)]\geq 0$ in Section \ref{sec::exist}. This section contains the mains ideas of the paper. In particular, we show in Section \ref{section:impossibility} that contrary to the case of law constraints on $Y$, deterministic non-decreasing processes $K$ cannot always be used so that neither the approach (ii) nor the approach (iii) above can be used. However, if non-deterministic $K$ processes are allowed, then we give conditions on the moments of the Malliavin derivative of the terminal payoff $\xi$ which ensure the existence of a supersolution, see Section \ref{section:supersol}. Then, we investigate in Section \ref{section:minimal} conditions ensuring the existence of a minimal supersolution to a $Z$ mean reflected BSDE. We follow up on an argument given in \cite{briandElieHu} about the time inconsistency of the problem if non-deterministic $K$ processes are allowed, to fully describe all possible minimal supersolutions in this case. Finally, we give in Section \ref{sec::app} an application.

\section{Preliminaries}\label{sec::Prelem}
\subsection{Spaces and Notations}
We are given a finite horizon $T$ and a complete probability space $(\Omega,\Fc,\P)$ endowed with a Brownian motion $B=(B_t)_{0\leq t\leq T}$ and its usual augmented filtration of  $\{\Fc_t\}_{0\leq t\leq T}$. Let $p>1$, we denote by
\begin{itemize}
    \item $L^2(\Fc_T)$  the set of real valued $\Fc_T-$measurable random variables $\xi$ such that $\mathbb E[|\xi|^p]]<\infty$,
    
     \item $\Sc^p$ the set of real valued $\Fc-$adapted continuous processes $Y$ on $[0, T]$ such that
$$\|Y \|_{\Sc^p} := \E[\sup_{0\leq r\leq T} |Y_r|^p]^{\frac{1}{p}} < \infty,$$
    \item $\Hc^p$ the set of predictable continuous $\R-$valued processes $Z$ s.t. $$\|Z\|_{\Hc^p} :=\E [\int_0^T |Z_r|^pdr]^{\frac{1}{p}}< \infty.$$
    \item $\Ac^2$ is the closed subset of $\Sc^2$ consisting in nondecreasing  processes $K = (K_t)_{0\leq t\leq T}$ with $K_0 = 0$.
\end{itemize}
Let $C^{1,2}(\mathbb R^+\times\mathbb R)$ be the space of continuously differentiable function on $\mathbb R^+$ and twice continuously differentiable function on $\mathbb R$. In addition, we denote by $L^2([0,T])$ the set of Borel measurable square integrable real valued functions on $[0,T]$ and let us consider the following inner product on $L^2([0,T])$
$$\langle f,g\rangle :=\int_0^Tf(t)g(t)dt,\text{ for any Borel measurable functions $(f,g)$ on $[0,T]$},$$
with associated norm $\|{\cdot}\|_{L^2([0,T])}$. 

A \textit{cylindrical functional} is a square integrable random variables $F$ of the form
\begin{equation}
\label{eq:cylindrical}
F=f(W_{t_1},\ldots,W_{t_n}), \quad (t_1,\ldots,t_n) \in [0,T]^n.
\end{equation}
where $f$ is a continuously differentiable and bounded function on $\mathbb R^n$ with $n\in \mathbb N\setminus\{0\}$ with bounded derivatives denoted by $f_{x_i}$.
For any cylindrical functional $F$ of the form \eqref{eq:cylindrical}, the Malliavin derivative $D F$ of $F$ is defined as the following $L^2([0,T])-$valued random variable:
\begin{equation}
\label{eq:DF}
D F:=\sum_{i=1}^n f_{x_i}(W_{t_1},\ldots,W_{t_n}) \textbf{1}_{[0,t_i]}.
\end{equation}
It is then customary to identify $DF$ with the stochastic process $(D_t F)_{t\in [0,T]}$. Denote then by $\mathcal{D}^{1,2}$ the closure of the set of cylindrical functionals with respect to the Sobolev norm $\|\cdot\|_{1,2}$, defined as:
$$ \|F\|_{1,2}:=\E\left[|F|^2\right] + \E\left[\int_0^T |D_t F|^2 dt\right].$$

\subsection{Definitions and settings}
Given a terminal condition $\xi\in L^2(\Fc_T)$, a generator $f:[0,T]\times \Omega\times \mathbb R^2\longrightarrow \mathbb R$ and a constraint $\ell:[0,T]\times \mathbb R\longrightarrow \mathbb R$, we aim at constructing processes $(Y,Z,K)\in \Sc^2\times\Hc^2\times\Ac^2$ to the following BSDE
\begin{equation}\label{eq::bsde}
    Y_t = \xi+\int_t^Tf(s,Y_s,Z_s)ds-\int_t^T Z_s dB_s+K_T-K_t, 0\leq t\leq T
\end{equation}
satisfying  the following unilateral constraint\footnote{The choice of unilateral constraints is aesthetic. Bilateral constraints can be treated the same way.}
\begin{equation}\label{eq::constraint}
    \E[\ell(t,Z_t)]\geq 0, \text{ for any } t\in [0, T].
\end{equation}
\begin{definition}
A triplet of process $(Y,Z,K)\in\Sc^2\times\Hc^2\times\Ac^2$ satisfying \eqref{eq::bsde} under the constraint \eqref{eq::constraint} is called a supersolution. A supersolution $(Y,Z,K)$ such that $K_T=0$ a.s. is called a solution.\vspace{0.5em}

 A supersolution $(Y^*,Z^*,K^*)$ such that, for any other supersolution $(Y,Z,K)$, $Y^*_t\leq Y_t$ for all $t\in[0,T]$, holds almost surely, is called a minimal supersolution.
\end{definition}

In the following we will often consider the following assumption extracted from Proposition 5.3 of \cite{elkpengquenez} ensuring the existence of the $(Y,Z)$ component to BSDE \eqref{eq::bsde} together with the existence of a version of $(D_r Y_t,D_r Z_t)_{0\leq r\leq t\leq T}$.

\paragraph{Assumption $(\mathbf{A})$}
\begin{itemize}
    \item[(i)] $\xi\in\Dc^{1,2}$ such that $\int_0^T \mathbb E[|D_t\xi|^2]dt<\infty$,
    \item[(ii)] $f$ is continuously differentiable in $(y,z)$, with uniformly bounded derivatives denoted by $\partial_Y f$ and $\partial_Z f$ respectively in $y$ and $z$ and for each $(y,z)$, $(t,\omega)\longrightarrow f(t,.,y,z)\in\Dc^{1,2}$ and its derivative denoted by $Df$ admits a progressively measurable version such that $|D_. f|$ is uniformly bounded by some constant $d\in\R^+$,
    \item[(iii)] $\xi\in L^4(\mathcal F_T)$ and $f(t,0,0)\in \mathcal H^4$,
\item[(iv)] for any $(y,y',z,z')$ and any $t\in [0,T]$
\[D_r f(t,\omega,y,z)-D_r f(t,\omega,y',z')|\leq \kappa_r(t,\omega)(|y-y'|+|z-z'|),\]
where $\{\kappa_r(t,\cdot),\; t\in [0,T]\}$ is an $\mathbb R^+-$valued adapted process for almost every $r\in [0,t]$ with $\kappa_r\in \mathcal H^4$ such that $\int_0^T \|\kappa_r\|_4^4dr<\infty$. 
\end{itemize}
Note that the properties $(iii)-(iv)$ can be relaxed in order to get Proposition 5.3 of \cite{elkpengquenez}, see for example \cite{mastroliaPossamaiReveillac}.

\section{Existence results}\label{sec::exist}
In this section we turn to existence of supersolution of BSDE \eqref{eq::bsde} under $Z$-mean reflection \eqref{eq::constraint} under the standing assumption $(\mathbf A)$.
\subsection{No restriction to deterministic $K$ or penalization schemes}\label{section:impossibility}

To illustrate that neither the approach (ii) nor the approach (iii) can be used for the existence of supersolution to $Z$ mean reflected BSDE, we first consider a simple example. Let $f=0$, $\ell(t,.)=Id(\cdot)-\nu$, $\nu\in\R$, $\xi=\int_0^T \lambda(t)dB_t$ for some deterministic function $\lambda$ such that there exists $t_0\in [0,T]$ with $\lambda(t_0)<\nu$. If we assume that there exists a solution $(Y,Z,K)$ to \eqref{eq::bsde} with deterministic component $K$, then for any $t\in [0,T]$ we have 
$\mathbb E[Z_t]=\mathbb E_t[D_t\xi]=\lambda(t)$. Hence, the constraint is not satisfied at time $t_0$. It is not enough to impose a consistency condition on $D_T\xi$ only unlike the case of BSDE with mean constraint on the $Y$ component, see \cite[Assumption $(H_\xi)$]{briandElieHu}. We need to assume at least $\lambda(t)\geq \nu$ for any $t$.\\

As a consequence, we cannot expect a penalization scheme to converge to a supersolution. As a matter of fact, consider the following penalized BSDE \[Y^n_t=\xi+\int_t^T n\E[Z^n_s]^-ds -\int_t^TZ^n_sdB_s, \; n\in\N\,\; \xi\in\Dc^{1,2}.\] This seems to penalize the fact that $\mathbb E[Z^n_s]<0$ when $n$ is large. However the penalization is deterministic so if every term converges this would yield a supersolution with deterministic $K$. Note that $Y^n_t=\E_t[\xi+\int_t^T n\E[Z^n_s]^-ds]=\E_t[\xi]+\phi^n(t)$ for some deterministic function $\phi^n$. In particular $Y^n_t-\phi^n(t)=\E_t[\xi]=\xi-\int_t^T D_s \xi dB_s$ so $Z^n_s=D_s\xi$ and $Y^n_t = \E_t[\xi]+n\int_t^T\E[D_s\xi]^-ds$. Thus $Y^n_0$ diverges if $\int_0^T\E[D_s\xi]^-ds>0$. Hence the penalization scheme diverges.\\

The following proposition provides non trivial examples for which we do not have solutions to BSDE \eqref{eq::bsde} under the constraint \eqref{eq::constraint} restricted to the class of deterministic $K$ components.

\begin{proposition}
Let $\ell$ in $C^{1,2}(\R^+\times \R)$. Let $(Y,Z,K)$ be a supersolution to \eqref{eq::bsde0}. Assume that one of the two following condition is satisfied.
\begin{itemize}
    \item[(a)] $\ell(t,x)=x-\nu$ with $\nu\in\R$ such that 
    \begin{align*}\E[\Gamma_t^TD_t\xi+\int_t^T \Gamma_t^sD_t f(s,Y_s,Z_s)ds]<\nu
    \end{align*}
    for some $0\leq t\leq T$, with $\Gamma_t^s = e^{\int_t^s \partial_Y f(r,Y_r,Z_r)dr+\partial_Z f(r,Y_r,Z_r)dB_r-\frac{\partial_Z f(r,Y_r,Z_r)^2}{2}dr}$.
    \item[(b)]   $D_t\xi$ and $D_t\ell$ are bounded,  $\partial_x\ell\geq \ubar m$ and $c\geq -\frac{\partial_{xx}^2\ell}{(\partial_x\ell)^2}$ with $\ubar m,c\in \mathbb R_+^*$. Assume moreover that either $d=0$ or $\partial_x\ell$ bounded and
    \begin{align*}
        \E\big[\log(\mathbb E_t[e^{\sqrt{c}\ell (e^{\int_t^T\partial_Y f(s,Y_s,Z_s)ds}D_t\xi)}e^{\int_t^T \bar m d\sqrt{c}e^{\int_t^s\partial_Y f(u,Y_u,Z_u)du}ds+\int_t^T\partial_Z f(s,Y_s,Z_s)dB_s-\frac{1}{2}\int_t^T(\partial_Z f(s,Y_s,Z_s))^2ds}])\big]<0
    \end{align*}
    for some $0\leq t\leq T$.
\end{itemize}
Then $K$ cannot be deterministic.
\end{proposition}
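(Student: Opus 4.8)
The plan is to argue by contradiction: suppose $K$ is deterministic and show that the constraint \eqref{eq::constraint} must fail at some time $t$, contradicting that $(Y,Z,K)$ is a supersolution. The key starting observation is that a deterministic $K$ does not interfere with Malliavin differentiation. Setting $\hat Y:=Y+K$ and $\hat\xi:=\xi+K_T$, the triple solves the \emph{unconstrained} BSDE $\hat Y_t=\hat\xi+\int_t^T f(s,\hat Y_s-K_s,Z_s)\,ds-\int_t^T Z_s\,dB_s$, and since $D_rK_s=0$ the shifted data $\hat\xi,\hat f$ inherit from Assumption $(\mathbf A)$ exactly the regularity needed to invoke Proposition 5.3 of \cite{elkpengquenez}. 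Hence $Z_t=D_tY_t$ and, for fixed $t$, the pair $(u_s,v_s):=(D_tY_s,D_tZ_s)_{s\in[t,T]}$ solves the linear BSDE
\[ u_s=D_t\xi+\int_s^T\!\big(D_tf(r,Y_r,Z_r)+\partial_Y f(r)\,u_r+\partial_Z f(r)\,v_r\big)\,dr-\int_s^T\! v_r\,dB_r,\qquad u_t=Z_t. \]

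For part (a), I would solve this linear BSDE by variation of constants with adjoint process $\Gamma_t^s$, which gives $Z_t=u_t=\E_t\big[\Gamma_t^T D_t\xi+\int_t^T\Gamma_t^s D_tf(s,Y_s,Z_s)\,ds\big]$. Taking expectations and using the explicit form of $\ell$, the constraint $\E[\ell(t,Z_t)]=\E[Z_t]-\nu\ge 0$ becomes $\E[\Gamma_t^T D_t\xi+\int_t^T\Gamma_t^s D_tf(s,Y_s,Z_s)\,ds]\ge\nu$, which is precisely the negation of condition (a). This already produces the contradiction for (a) with essentially no further work.

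For part (b), the idea is to linearise the drift of $u$ and then exploit the convexity encoded in $c\ge-\partial^2_{xx}\ell/(\partial_x\ell)^2$. First multiply by the (bounded) integrating factor $\Theta_s:=e^{\int_t^s\partial_Y f(r)dr}$ and set $w_s:=\Theta_s u_s$; by It\^o this kills the $\partial_Y f\,u_s$ term, leaving $dw_s=-\Theta_s D_tf\,ds-\Theta_s\partial_Z f\,v_s\,ds+\Theta_s v_s\,dB_s$, with terminal value $w_T=e^{\int_t^T\partial_Y f\,ds}D_t\xi$ and $w_t=Z_t$. A Girsanov change of measure $d\Q/d\P=\mathcal E(\int\partial_Z f\,dB)$ (a true martingale, since $\partial_Z f$ is uniformly bounded) removes the $\partial_Z f$ term. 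I then apply It\^o to $g(x):=e^{\sqrt c\,\ell(t,x)}$: the convexity hypothesis on $\ell$ guarantees $g''\ge0$, so the second-order term is nonnegative and can be discarded, while $|D_tf|\le d$ and $0<\partial_x\ell\le\bar m$ bound the remaining $\Q$-drift below by $-\sqrt c\,\bar m\,d\,\Theta_s\,g(w_s)$. Consequently $g(w_s)\,e^{\int_t^s\sqrt c\,\bar m\,d\,\Theta_r dr}$ is a $\Q$-submartingale, whence $e^{\sqrt c\,\ell(t,Z_t)}\le\E^\Q_t\big[e^{\sqrt c\,\ell(w_T)}e^{\int_t^T\sqrt c\,\bar m\,d\,\Theta_s ds}\big]$. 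Rewriting the $\Q$-expectation through the explicit density, taking logarithms (increasing) and then $\E[\cdot]$ gives $\E[\ell(t,Z_t)]\le\tfrac{1}{\sqrt c}\E\big[\log\E_t[\cdots]\big]<0$ by the assumed inequality, again contradicting \eqref{eq::constraint}. When $d=0$ the $D_tf$ term vanishes and $g(w_s)$ is directly a $\Q$-submartingale, which is why boundedness of $\partial_x\ell$ is not needed in that case.

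The two transformations and the sign bookkeeping in It\^o's formula are mechanical; the genuine obstacle is integrability. I would need to verify that the stochastic integrals arising after the measure change are true $\Q$-martingales rather than mere local martingales, and that $\ell(w_s)$ and $g'(w_s)\Theta_s v_s$ are integrable enough both for the conditional-expectation inequality and for passing to logarithms. This is exactly what the boundedness of $D_t\xi$, $D_t\ell$ and (when $d\neq0$) of $\partial_x\ell$, together with the uniform bounds on $\partial_Y f,\partial_Z f$ from $(\mathbf A)$, are designed to provide. A secondary, easier point is checking that replacing $\xi$ by $\hat\xi=\xi+K_T$ preserves membership in $\Dc^{1,2}$ and the hypotheses of Proposition 5.3 of \cite{elkpengquenez}, which is immediate because $K_T$ is a constant.
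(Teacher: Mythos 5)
Your treatment of case (a) coincides with the paper's: both arguments absorb the deterministic $K$ into the data, Malliavin-differentiate the BSDE via Proposition 5.3 of \cite{elkpengquenez}, identify $Z_t=D_tY_t$, solve the resulting linear BSDE with the adjoint $\Gamma_t^s$, and take expectations to contradict the constraint.

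For case (b) you take a genuinely different route. The paper applies It\^o's formula to $\ell(s,e^{\int_t^s\partial_Y f\,du}D_tY_s)$, reads off a quadratic BSDE for this process, invokes Kobylanski's comparison theorem (Theorem 2.6 of \cite{kobyl}) against a dominating quadratic BSDE whose generator contains $\frac{c}{2}|z|^2$, and solves that dominating equation explicitly by a Cole--Hopf transform. You instead kill the $\partial_Y f$ term with the integrating factor $\Theta_s$, kill the $\partial_Z f$ term with a Girsanov change of measure (legitimate, since $\partial_Z f$ is uniformly bounded under Assumption $(\mathbf A)$), and run a direct submartingale argument on the exponential of the transformed process. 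This is morally the Cole--Hopf transform carried out by hand, and it buys something real: no appeal to quadratic BSDE existence/comparison theory is needed, only a one-sided drift estimate is used, and the bound you land on is exactly the expression displayed in the hypothesis, including the factor $e^{\int_t^T\bar m d\sqrt c\,e^{\int_t^s\partial_Y f\,du}ds}$ and the Girsanov density. A secondary difference: you freeze the time argument of $\ell$ at $t$, so no $\partial_t\ell$ term appears in your drift, whereas the paper tracks $\ell(s,\cdot)$ up to the terminal value $\ell(T,\cdot)$, which is why it assumes $D_t\ell$ (read: $\partial_t\ell$) bounded.

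There is, however, a concrete gap in your convexity step. With $g(x)=e^{\sqrt c\,\ell(t,x)}$ one has $g''(x)=\bigl(\sqrt c\,\partial_{xx}^2\ell+c(\partial_x\ell)^2\bigr)e^{\sqrt c\,\ell(t,x)}$, so $g''\geq 0$ is equivalent to $\sqrt c\geq -\partial_{xx}^2\ell/(\partial_x\ell)^2$. The hypothesis only provides $c\geq -\partial_{xx}^2\ell/(\partial_x\ell)^2$, and since $c$ may exceed $1$ this is strictly weaker: for $c=4$ and $-\partial_{xx}^2\ell/(\partial_x\ell)^2\equiv 3$ the hypothesis holds while $g''<0$. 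So your assertion that "the convexity hypothesis guarantees $g''\geq 0$" fails in general. The repair is to work with $g(x)=e^{c\,\ell(t,x)}$, for which $g''\geq 0$ is exactly the hypothesis; but then your final bound reads $\ell(t,Z_t)\leq\frac{1}{c}\log\E_t[e^{c\ell(\cdots)}\cdots]$, i.e.\ you prove the proposition with $c$ in place of $\sqrt c$ throughout the displayed assumption. You should know that the paper's own proof carries the mirror image of this defect: it compares against the generator with $\frac{c}{2}|z|^2$ (which the hypothesis does license) but then writes the Cole--Hopf solution with $\sqrt c$, which corresponds to quadratic coefficient $\frac{\sqrt c}{2}$. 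The $\sqrt c$ versus $c$ mismatch therefore originates in the statement itself; nevertheless, as written, your convexity step is the one place where your argument breaks and must be patched as above. Your remaining concerns (true martingale property after the measure change, integrability for the conditional-expectation inequality) are handled at the same level of rigor as in the paper, which likewise defers to the boundedness of $D_t\xi$, $\partial_x\ell$ and the derivatives of $f$.
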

\begin{proof}
Let $(Y,Z,K)$ be such a supersolution with $K$ deterministic. Under Assumption $(\mathbf{A})$, up to a change of variable for the $Y$ component, and noting that $f(\cdot,K_T-K_\cdot,0)\in \mathcal H^4$ and $f$ is Lipschitz, we deduce from Proposition 5.3 of \cite{elkpengquenez} that
\begin{equation*}
    D_rY_t = D_r\xi+\int_t^T(D_r f(s,Y_s,Z_s)+\partial_Y f(s,Y_s,Z_s)D_rY_s+\partial_Z f(s,Y_s,Z_s)D_rZ_s)ds -\int_t^T D_rZ_s dB_s
\end{equation*}
for $0\leq r\leq t\leq T$.\vspace{0.3em}

\textit{Case (a).} Note that 
\begin{equation*}
    \mathbb E[Z_t] = \mathbb E[D_tY_t] = \E[\Gamma_t^TD_t\xi+\int_t^T \Gamma_t^s D_tf(s,Y_s,Z_s)ds]<\nu,
\end{equation*}

which contradicts \eqref{eq::constraint}.
\vspace{0.3em}

\textit{Case (b).}
By using It\^o's Lemma we get for any $0\leq r\leq t\leq T$
\begin{align*}
   & \ell(t,e^{\int_r^t\partial_Y f(s,Y_s,Z_s)ds}D_rY_t)\\ &= \ell(T,e^{\int_r^T\partial_Y f(s,Y_s,Z_s)ds}D_r\xi)\\
    &+\int_t^T\big( \partial_x\ell(s,e^{\int_r^s\partial_Y f(u,Y_u,Z_u)du}D_rY_s)e^{\int_r^s\partial_Y f(u,Y_u,Z_u)du}(D_r f(s,Y_s,Z_s)+\partial_Z f(s,Y_s,Z_s)D_rZ_s)\\
    &\hspace{3em}-\partial_t\ell(s,e^{\int_r^s\partial_Y f(u,Y_u,Z_u)du}D_rY_s)-\frac{1}{2}\partial_{xx}^2\ell(s,e^{\int_r^s\partial_Y f(u,Y_u,Z_u)du}D_rY_s)(e^{\int_r^s\partial_Y f(u,Y_u,Z_u)du}D_rZ_s)^2\big)ds\\
    &-\int_t^T \partial_x\ell(s,e^{\int_r^s\partial_Y f(u,Y_u,Z_u)du}D_rY_s) e^{\int_r^s\partial_Y f(u,Y_u,Z_u)du}D_rZ_s dB_s.
\end{align*}

As $\partial_x\ell>0$, we set for any $0\leq r\leq T$ and $t\in [r,T]$
\[\tilde Y_t^r:=  \ell(t,e^{\int_r^t\partial_Y f(s,Y_s,Z_s)ds}D_rY_t),\; \tilde Z_t^r:= \partial_x\ell(t,e^{\int_r^t\partial_Y f(s,Y_s,Z_s)ds}D_r Y_t) e^{\int_r^t\partial_Y f(s,Y_s,Z_s)ds}D_rZ_t .  \]
Hence, 

\begin{align*}
    \tilde Y_t^r &= \ell(T,e^{\int_r^T\partial_Y f(s,Y_s,Z_s)ds}D_r\xi)-\int_t^T \tilde Z_s^r dB_s\\
    &+\int_t^T\big( \partial_x\ell(s,e^{\int_r^s\partial_Y f(u,Y_u,Z_u)du}D_rY_s)e^{\int_r^s\partial_Y f(u,Y_u,Z_u)du}D_r f(s,Y_s,Z_s)-\partial_t\ell(s,e^{\int_r^s\partial_Y f(u,Y_u,Z_u)du}D_rY_s)\\
    &\hspace{2.5em}+ \partial_Z f(s,Y_s,Z_s)\tilde Z_s^r-\frac{1}{2}\frac{\partial_{xx}^2\ell(s,e^{\int_r^s\partial_Y f(u,Y_u,Z_u)du}D_rY_s)}{\partial_x\ell(s,e^{\int_r^s\partial_Y f(u,Y_u,Z_u)du}D_rY_s)^2}|\tilde Z_s^r|^2\big)ds.
\end{align*}

We now denote by $(\overline Y_t^r,\overline Z_t^r)$ the unique solution to the following BSDE

\begin{align*}
    \overline Y_t^r &= \ell(T,e^{\int_r^T\partial_Y f(s,Y_s,Z_s)ds}D_r\xi)\\
 &+\int_t^T\big( \partial_x\ell(s,e^{\int_r^s\partial_Y f(u,Y_u,Z_u)du}D_rY_s)e^{\int_r^s\partial_Y f(u,Y_u,Z_u)du}D_r f(s,Y_s,Z_s)-\partial_t\ell(s,e^{\int_r^s\partial_Y f(u,Y_u,Z_u)du}D_rY_s)\\
 &+ \partial_Z f(s,Y_s,Z_s)\overline Z_s^r+\frac{c}{2}|\overline Z_s^r|^2\big)ds-\int_t^T \overline Z_s^r dB_s.
\end{align*}
Note that, there exists a unique solution to this BSDE which is given by using a Cole-Hopf transform, see for instance \cite{imkeller2010results}, so that
\begin{align*}
    \overline Y_t^r = \frac{1}{\sqrt{c}}\log(\mathbb E_t[&e^{\sqrt{c}\ell(T,e^{\int_r^T\partial_Y f(s,Y_s,Z_s)ds}D_r\xi)}e^{\int_t^T\partial_Z f(s,Y_s,Z_s)dB_s-\frac{1}{2}\int_t^T(\partial_Z f(s,Y_s,Z_s))^2ds}])\\
    &e^{\sqrt{c}\int_t^T\big( \partial_x\ell(s,e^{\int_r^s\partial_Y f(u,Y_u,Z_u)du}D_rY_s)e^{\int_r^s\partial_Y f(u,Y_u,Z_u)du}D_r f(s,Y_s,Z_s)-\partial_t\ell(s,e^{\int_r^s\partial_Y f(u,Y_u,Z_u)du}D_rY_s)\big)ds}
\end{align*}
which is bounded given the assumptions of the theorem.
Using Theorem 2.6 in \cite{kobyl}, we have $\tilde Y_t^r\leq \overline Y_t^r$ for any $r\leq t\leq T$. Taking $r=t$ and recalling that $\tilde Y_t^t= \ell(t,Z_t)$, we get
\begin{align*}\ell(t,Z_t)\leq \frac{1}{\sqrt{c}}\log(\mathbb E_t[&e^{\sqrt{c}\ell(T,e^{\int_t^T\partial_Y f(s,Y_s,Z_s)ds}D_t\xi)}e^{\int_t^T\partial_Z f(s,Y_s,Z_s)dB_s-\frac{1}{2}\int_t^T(\partial_Z f(s,Y_s,Z_s))^2ds}])\\
&e^{\sqrt{c}\int_t^T\big( \partial_x\ell(s,e^{\int_t^s\partial_Y f(u,Y_u,Z_u)du}D_tY_s)e^{\int_t^s\partial_Y f(u,Y_u,Z_u)du}D_t f(s,Y_s,Z_s)-\partial_t\ell(s,e^{\int_r^s\partial_Y f(u,Y_u,Z_u)du}D_rY_s)\big)ds}.
\end{align*}

We finally deduce from our assumptions that $\mathbb E[\ell(t,Z_t)]<0$ for some $t$. Hence, there is no solution with deterministic $K$ to BSDE \eqref{eq::bsde} satisfying the constraint \eqref{eq::constraint}.
\end{proof}

\subsection{Non-trivial supersolutions}\label{section:supersol}
In this section, we build non-trivial supersolutions to the BSDE \eqref{eq::bsde} under the constraints \eqref{eq::constraint}. We seek to extend some existence results given in the literature for almost sure constraints.\\

\subsubsection{Weak \textit{v.s.} almost sure constraints}\label{subsec::weakvsstrong}

Note that in \cite{peng} or \cite{Peng2007ConstrainedBA}, the authors investigate the existence of a solution which satisfies the constraint in $Z_s\in\Gamma_s$, where for all $s\in[0,T]$ $\Gamma_s$ is some convex subset of $\R$, but Example 3.1 in \cite{peng} assumes $0\in \Gamma_s$ together with a specific shape of the generator. In \cite{cvitanic}, the authors also assume $0\in \Gamma_s$ and give other conditions ensuring existence in Section 7, relying either on a boundedness assumption or a convexity assumption on the generator. Again in \cite{Raine-1998} the same assumption on $\Gamma_s$ is made, in the Markovian case, with also some boundedness assumptions. It is actually very simple to build examples where there is no supersolution which can satisfy an almost sure constraint, if $0\not\in\Gamma_s$.\\

Suppose $\xi$ and $f$ be lower bounded by reals $\inf \xi$ and $\inf f$ respectively. Using the arguments of \cite{bouchard2018bsde}, we see that any supersolution satisfying $Z_t\geq 1$ for all $t\in[0,T]$ is lower bounded for all $n\in\N$ by $Y^n$ where $(Y^n,Z^n)\in\Sc^2\times\Hc^2$ is the solution to the BSDE
\begin{align*}
    Y^n_t=\xi+\int_t^T(f(s,Y^n_s,Z^n_s)+ n(1-Z^n_s)^+)ds-\int_t^TZ_sdB_s.
\end{align*}
Let $(\tilde Y^n,\tilde Z^n)\in\Sc^2\times\Hc^2$ be the solution to the BSDE
\begin{align*}
    \tilde Y^n_t=\inf \xi+\int_t^T(\inf  f+n(1-\tilde Z^n_s) )ds-\int_t^T\tilde Z_sdB_s.
\end{align*}
From comparison Theorem, we see that $Y_0^n\geq \tilde Y^n_0\geq \inf \xi+ (n +\inf f)T\underset{n\rightarrow +\infty}{\longrightarrow}+\infty$ so this contradicts the existence of a supersolution satisfying the constraint.\\

In \cite{bouchard2018bsde}, the existence of a minimal supersolution is shown to be a consequence of the finiteness of the value of a particular singular control problem, for constraints of the form $q(t,Z_t)\leq 0$ with $q$ a supremum over linear functions. All those results except \cite{cvitanic} rely on a penalization argument and the classical convergence result of \cite{Peng1999MonotonicLT}.\\

However, taking for example  $f=0$, we note that
\[ Z_t=D_tY_t=D_t\xi - \int_t^T D_t Z_s dW_s.\]
Hence, if $\mathbb E[D_t \xi]\geq 1$ for any $t\in [0,T]$, so there exists a solution to BSDE \eqref{eq::bsde} which satisfies $\mathbb E[Z_t]\geq 1$. As a consequence, we expect that we can obtain existence results of supersolutions to \eqref{eq::bsde} by relaxing the constraint on the $Z$ component.

\subsubsection{Existence results}

Before turning to the main results of this work, we show two technical lemmas.

\begin{lemma}\label{lemma_const}
Let $\mathcal Y,\mathcal Z\in \mathcal S^2\times \mathcal H^2$ and $f:[0,T]\times \Omega\times \mathbb R^2\longrightarrow \mathbb R$ such that the derivative of $f$ with respect to the $z$-component is uniformly bounded \textit{i.e.}  $\|\partial_Z f\|_{\infty}<\infty$. Let
\begin{align*}
    \tilde\Gamma_t^T := e^{\int_t^T \partial_Z f(s,\mathcal Y_s,\mathcal Z_s)dB_s-\frac{\partial_Z f(s,\mathcal Y_s,\mathcal Z_s)^2}{2}ds},
\end{align*}
for any $t\in [0,T]$. Then
\[\begin{cases}

    \E[|\tilde\Gamma_t^T|^p]^{1/p}\geq e^{(-1+p)\frac{\|\partial_Z f\|^2}{2}(T-t)}, & \text{for any } p<0,\\
    \E[|\tilde\Gamma_t^T|^p]^{1/p}\leq e^{(-1+p)\frac{\| |\partial_Z f\|^2}{2}(T-t)} & \text{for any } p>1.
\end{cases}\]
\end{lemma}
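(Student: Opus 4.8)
The plan is to recognize $\tilde\Gamma_t^T$ as the terminal value of a Doléans--Dade (stochastic) exponential and to compute its $p$-th moment explicitly, up to a factor that can be bounded pointwise. Write $\theta_s := \partial_Z f(s,\mathcal Y_s,\mathcal Z_s)$, so that $|\theta_s|\leq M:=\|\partial_Z f\|_\infty$ for all $s$, and $\tilde\Gamma_t^T=\exp\big(\int_t^T\theta_s dB_s-\tfrac12\int_t^T\theta_s^2 ds\big)$. Since $\tilde\Gamma_t^T>0$ it may be raised to any real power $p$, and completing the square in the exponent yields
\begin{equation*}
(\tilde\Gamma_t^T)^p=\exp\Big(\int_t^T p\theta_s\,dB_s-\tfrac12\int_t^T (p\theta_s)^2\,ds\Big)\,\exp\Big(\tfrac{p(p-1)}{2}\int_t^T\theta_s^2\,ds\Big),
\end{equation*}
where the first factor is exactly the terminal value of the stochastic exponential associated with the integrand $p\theta$.

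Next I would argue that this first factor is a genuine martingale with expectation $1$. Because $\theta$ is bounded, $\int_t^T (p\theta_s)^2\,ds\leq p^2M^2(T-t)$ deterministically, so Novikov's criterion is trivially satisfied and the stochastic exponential of $p\theta$ is a true martingale started at $1$ at time $t$; in particular its expectation at $T$ equals $1$. This is the only place where the boundedness of $\partial_Z f$ is genuinely used, and it is what guarantees finiteness of the $p$-th moment in the first place (notably for $p<0$, where $(\tilde\Gamma_t^T)^p$ is a priori unbounded).

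The decisive observation is that in \emph{both} regimes of the statement the coefficient $p(p-1)$ is positive: $p>1$ gives $p>0$ and $p-1>0$, while $p<0$ gives $p<0$ and $p-1<0$. Hence, using $0\leq\int_t^T\theta_s^2\,ds\leq M^2(T-t)$, the second (random) factor obeys the pointwise upper bound $\exp\big(\tfrac{p(p-1)}{2}\int_t^T\theta_s^2\,ds\big)\leq\exp\big(\tfrac{p(p-1)}{2}M^2(T-t)\big)$. Multiplying by the positive martingale factor, taking expectations, and pulling the deterministic constant out of the expectation, the martingale property gives
\begin{equation*}
\E\big[(\tilde\Gamma_t^T)^p\big]\leq\exp\Big(\tfrac{p(p-1)}{2}M^2(T-t)\Big).
\end{equation*}

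Finally I would take $p$-th roots, tracking the direction carefully. For $p>1$ the map $x\mapsto x^{1/p}$ is increasing, so the bound becomes $\E[(\tilde\Gamma_t^T)^p]^{1/p}\leq\exp\big(\tfrac{p-1}{2}M^2(T-t)\big)$, which is the stated inequality. For $p<0$ the map is decreasing, so the \emph{same} upper bound on $\E[(\tilde\Gamma_t^T)^p]$ flips into the lower bound $\E[(\tilde\Gamma_t^T)^p]^{1/p}\geq\exp\big(\tfrac{p-1}{2}M^2(T-t)\big)$, exactly explaining why the two cases carry opposite inequality signs. The only point requiring care is this sign-flip for negative $p$, together with the verification that $p(p-1)>0$ holds precisely on the two ranges $p<0$ and $p>1$ treated in the lemma; everything else is a routine stochastic-exponential computation.
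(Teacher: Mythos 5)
Your proof is correct, and it takes a genuinely more direct route than the paper's. The paper obtains the same bounds via H\"older's inequality with an auxiliary exponent $q>1$: it splits the exponent of $|\tilde\Gamma_t^T|^p$ so that the $q$-th power of one factor is the stochastic exponential of $qp\,\partial_Z f$ (hence has expectation one), bounds the complementary factor by $\|\partial_Z f\|_\infty$ after checking $\tfrac{pq(qp-1)}{q-1}>0$ on both ranges of $p$, arrives at $\E[|\tilde\Gamma_t^T|^p]\leq e^{p(qp-1)\frac{\|\partial_Z f\|_\infty^2}{2}(T-t)}$, and only recovers the stated constant in the limit $q\to 1$; the inequality reversal for $p<0$ occurs at the same place as in your argument, when raising to the power $1/p$. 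You bypass both the H\"older step and the limiting argument: completing the square exactly factors $(\tilde\Gamma_t^T)^p$ into the Dol\'eans--Dade exponential of $p\theta$ (a true martingale by Novikov, since $\theta$ is bounded) times $e^{\frac{p(p-1)}{2}\int_t^T\theta_s^2ds}$, and the single sign observation $p(p-1)>0$ for $p<0$ or $p>1$ lets you bound that factor pointwise by its deterministic supremum --- effectively the degenerate $(1,\infty)$ H\"older pair. What your version buys: the constant $e^{(p-1)\frac{\|\partial_Z f\|_\infty^2}{2}(T-t)}$ comes out exactly rather than as a limit, there is no parameter bookkeeping (the paper's intermediate display even carries a typographical slip in its $\frac{qp^2}{2}$ term, which your computation avoids entirely), and the role of each hypothesis is cleanly isolated. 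Both arguments rest on the same two ingredients --- martingality of a stochastic exponential and boundedness of $\partial_Z f$ --- so the difference is one of execution, but yours is the simpler and sharper write-up.
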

\begin{proof}
 Let $q>1$. Then by applying H\"older's Inequality, 
\begin{align*}
    &\E[|\tilde\Gamma_t^T|^p] = \E[e^{\int_t^T p\partial_Z f(s,\mathcal Y_s,\mathcal Z_s)dB_s-p\frac{\partial_Z f(s,\mathcal Y_s,\mathcal Z_s)^2}{2}ds}]\\
    &=\E[e^{\int_t^T p\partial_Z f(s,\mathcal Y_s,\mathcal Z_s)dB_s-\frac{qp^2}{2}\frac{\partial_Z f(s,\mathcal Y_s,\mathcal Z_s)^2}{2}ds+p(-1+qp)\frac{\partial_Z f(s,\mathcal Y_s,\mathcal Z_s)^2}{2}ds}]\\
    &\leq \E[e^{\frac{pq(-1+qp)}{(q-1)}\int_t^T\frac{\partial_Z f(s,y_s,z_s)^2}{2}ds}]^{\frac{q-1}{q}}.
\end{align*}
Choosing $q$ such that $\frac{pq(-1+qp)}{(q-1)}>0$ whether $p<0$ or $p>1$, we have
\[\E[|\tilde\Gamma_t^T|^p] \leq e^{p(-1+qp)\frac{\|\partial_Z f\|^2}{2}(T-t)}.\] We obtain the result by putting this to the power $\frac{1}{p}$ and taking $q\longrightarrow 1$.
\end{proof}

Remember that the process $K$ appearing in the BSDE must be non-decreasing and non-negative. We have seen that we cannot restrict the study to deterministic $K$ as in \cite{briandElieHu}. We thus focus on some particular shapes of $K$ with stochastic exponential similarly to \cite[Remark 6]{briandElieHu}. 
\begin{lemma}
Let $\alpha:[0,T]\mapsto\R$ be a deterministic c\`adl\`ag function with $\int_0^T \alpha_t^2dt<\infty$ and $K_t=\int_0^te^{\int_0^s\{\alpha_rdB_r-\frac{1}{2}\alpha_r^2dr\}}ds$ for $0\leq t\leq T$. Then, $K\in\Ac^2$ and $K_t\in\Dc^{1,2}$ for all $0\leq t\leq T$.
\end{lemma}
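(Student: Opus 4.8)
The plan is to exploit the explicit lognormal structure of the integrand. Write $\Ec_s := e^{\int_0^s \alpha_r dB_r - \frac12\int_0^s \alpha_r^2 dr}$, so that $K_t = \int_0^t \Ec_s\,ds$. Since $\alpha$ is deterministic with $\int_0^T\alpha_r^2 dr<\infty$, the process $(\Ec_s)_s$ is the Dol\'eans--Dade exponential of the Gaussian martingale $\int_0^\cdot \alpha_r dB_r$; in particular it is a.s.\ continuous, strictly positive, adapted, and a genuine martingale with $\E[\Ec_s]=1$. Positivity of $\Ec$ immediately gives that $t\mapsto K_t$ is nondecreasing, continuous, adapted, with $K_0=0$, so $K$ has the right pathwise structure for $\Ac^2$. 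For the $\Sc^2$ bound I would use that, by monotonicity, $\sup_{0\le t\le T}K_t = K_T$, reducing matters to $\E[K_T^2]<\infty$. The key computation is the second moment of $\Ec_s$: since $N_s:=\int_0^s\alpha_r dB_r$ is centered Gaussian with variance $v_s:=\int_0^s\alpha_r^2 dr$, one gets $\E[\Ec_s^2]=\E[e^{2N_s-v_s}]=e^{2v_s-v_s}=e^{v_s}\le e^{\int_0^T\alpha_r^2dr}=:C$. Then Cauchy--Schwarz in the time variable yields $\E[K_T^2]\le T\int_0^T\E[\Ec_s^2]\,ds\le T^2 C<\infty$, proving $K\in\Ac^2$.

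For Malliavin differentiability, I would first show $\Ec_s\in\Dc^{1,2}$ for each fixed $s$ with $D_u\Ec_s=\alpha_u\Ec_s\mathbf{1}_{[0,s]}(u)$. Heuristically this is the chain rule applied to $\Ec_s=g(N_s)$ with $g(x)=e^{x-v_s/2}$ and $D_uN_s=\alpha_u\mathbf{1}_{[0,s]}(u)$. The subtle point --- and the step I expect to be the main obstacle --- is that $g$ has unbounded derivative, so the standard chain rule for bounded-$C^1$ functions does not apply directly; I would justify the formula by an approximation argument (truncating $g$, or approximating $\Ec_s$ by cylindrical functionals), using that the exponential of a Gaussian has finite moments of all orders to pass to the limit in $\|\cdot\|_{1,2}$. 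Granting this, I would commute the Malliavin derivative with the Lebesgue integral in $s$ --- valid because $s\mapsto\Ec_s$ is $\Dc^{1,2}$-valued with $\int_0^t\|\Ec_s\|_{1,2}^2\,ds<\infty$ by the moment bounds above --- to obtain
\begin{equation*}
D_u K_t=\int_0^t D_u\Ec_s\,ds=\alpha_u\int_u^t\Ec_s\,ds\,\mathbf{1}_{[0,t]}(u)=\alpha_u\,(K_t-K_u)\,\mathbf{1}_{[0,t]}(u).
\end{equation*}

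Finally I would bound the Sobolev seminorm. Using $(K_t-K_u)^2\le t\int_0^t\Ec_s^2\,ds$ (Cauchy--Schwarz) together with $\E[\Ec_s^2]\le C$ gives $\E[(K_t-K_u)^2]\le t^2C$ uniformly in $u$, whence
\begin{equation*}
\E\Big[\int_0^T|D_uK_t|^2\,du\Big]=\int_0^t\alpha_u^2\,\E[(K_t-K_u)^2]\,du\le t^2C\int_0^T\alpha_u^2\,du<\infty.
\end{equation*}
Combined with $\E[K_t^2]\le\E[K_T^2]<\infty$, this yields $\|K_t\|_{1,2}<\infty$, hence $K_t\in\Dc^{1,2}$. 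All estimates other than the differentiability of $\Ec_s$ and the differentiation-under-the-integral reduce to routine Gaussian moment bounds, so I expect those two measure-theoretic justifications to carry essentially all the technical weight.
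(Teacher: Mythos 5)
Your proof is correct and follows essentially the same route as the paper: a Gaussian second-moment bound on the stochastic exponential $\Ec_s$, monotonicity to reduce the $\Sc^2$ bound to $\E[K_T^2]<\infty$, and an explicit Malliavin derivative of $K_t$. The minor differences are all in your favour. You bound $\E[K_T^2]$ by Cauchy--Schwarz in time, where the paper bounds $K_T\le T\sup_{s}\Ec_s$ and invokes Doob's maximal inequality (both work). More substantively, your formula $D_uK_t=\alpha_u(K_t-K_u)\mathbf{1}_{u\le t}$ is the correct one: the paper asserts $D_uK_t=\mathbf{1}_{u\le t}\,\alpha_u K_t$, which forgets that $D_u$ annihilates the integrand $\Ec_s$ for $s<u$; since $0\le K_t-K_u\le K_t$, the paper's subsequent estimate survives as an inequality, but the identity as stated there is wrong. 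Finally, you make explicit the two technical points the paper passes over in silence --- the chain rule for the exponential (whose derivative is unbounded, handled by truncation plus finiteness of all moments of a lognormal) and the interchange of $D$ with the Lebesgue integral in $s$ --- and the conditions you cite for them are the standard sufficient ones, so nothing is missing.
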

\begin{proof}
The process $K$ is well defined as $\E[(e^{\int_0^s\alpha_rdB_r-\frac{1}{2}\alpha_r^2dr})^2]=e^{\int_0^s\alpha_r^2dr}\leq e^{\int_0^T\alpha_r^2dr}$ for any $s\leq T$. Clearly $K_0=0$ and $K$ is non-decreasing. Also
\begin{align*}
    \E[(\sup_{t\in[0,T]}K_t)^2]=\E[K_T^2]\leq\E[(T\sup_{s\in[0,T]}e^{\int_0^s\alpha_rdB_r-\frac{1}{2}\alpha_r^2dr})^2]\leq 4T^2\E[(e^{\int_0^T\alpha_rdB_r-\frac{1}{2}\alpha_r^2dr})^2]<\infty,
\end{align*}
where we have used successively the fact that $K$ is non-decreasing, that $K$ is pathwise differentiable, and the Doob inequality. Consequently, $K\in\Sc^2$. Moreover, we note that $D_uK_t=\mathbf{1}_{u\leq t}\alpha_uK_t$ for all $0\leq u,t\leq T$ so $\E[\int_0^T|D_uK_t|^2du]=\E[K_t^2]\int_0^t \alpha_t^2dt<\infty$ so $K_t\in\Dc^{1,2}$.
\end{proof}

These lemmas enable us to construct supersolutions to \eqref{eq::bsde} with linear constraints and under conditions on the Malliavin derivative of $\xi$.

\begin{proposition}\label{th::exi}
We assume that $\ell(t,.)=Id(\cdot)-\nu(t)$ for some real function $\nu$ on $[0,T]$ with $\nu(T)\leq\E[D_T \xi]$. In addition, assume that one of the following condition is satisfied,

\begin{enumerate}
    \item $\partial_Z f=0$, and
    \begin{align*}
        t\in[0,T)\mapsto\frac{1}{T-t}\big[\nu(t)-\big(-e^{(T-t)\sup \partial_Y f }\E[D_t\xi^-]+e^{(T-t)\inf \partial_Y f }\E[D_t\xi^+]\big)\big]
    \end{align*}
    is upper bounded;\\

\item there exist two map $p_+,p_-$ from $[0,T)$ into $(1,+\infty)$ such that
    \begin{align*}
        t\in[0,T)\mapsto&\frac{1 }{T-t}\big[\nu(t)-(-e^{(T-t)\sup \partial_Y f }e^{\frac{1}{p_-(t)-1}\frac{\sup |\partial_Z f|^2}{2}(T-t)}\E[|D_t\xi^-|^{p_-(t)}]^{\frac{1}{p_-(t)}}\\
        &+e^{(T-t)\inf \partial_Y f }e^{\frac{p_+(t)}{1-p_+(t)}\frac{\sup |\partial_Z f|^2}{2}(T-t)}\E[|D_t\xi^+|^{\frac{1}{p_+(t)}}]^{p_+(t)})\big]
    \end{align*}
    is upper bounded.
\end{enumerate}

Then there exists a supersolution to BSDE \eqref{eq::bsde} under the $Z-$mean constraint \eqref{eq::constraint}.

\end{proposition}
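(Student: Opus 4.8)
The plan is to realise the supersolution with the stochastic-exponential family for $K$ introduced in the lemma preceding the statement, and to tune its parameter so that the Malliavin representation of $Z$ forces the mean constraint. Concretely, I fix a deterministic $\alpha\in L^2([0,T])$ (to be selected) and set $k_s=e^{\int_0^s\alpha_r dB_r-\frac12\int_0^s\alpha_r^2dr}$, $K_t=\int_0^t k_s\,ds$; by that lemma $K\in\Ac^2$ and each $K_t\in\Dc^{1,2}$. Since $K_T-K_t=\int_t^T k_s\,ds$ does not depend on $(y,z)$, I define $(Y,Z)\in\Sc^2\times\Hc^2$ as the solution of the BSDE with terminal value $\xi$ and generator $g(s,y,z):=f(s,y,z)+k_s$; as $f(\cdot,0,0)\in\Hc^4$ and $k\in\Hc^2$, this is a standard Lipschitz BSDE, and the triple $(Y,Z,K)$ solves \eqref{eq::bsde} by construction. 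It then remains only to check \eqref{eq::constraint}, i.e. $\E[Z_t]\ge\nu(t)$ for all $t$, and this is where $\alpha$ enters.

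For the verification I would invoke Assumption $(\mathbf A)$ and Proposition 5.3 of \cite{elkpengquenez} applied to $g$: since $\partial_Y g=\partial_Y f$, $\partial_Z g=\partial_Z f$ and $D_t g(s,\cdot)=D_t f(s,\cdot)+\alpha_t k_s\mathbf 1_{t\le s}$, one obtains $Z_t=D_tY_t$ together with the linear representation whose weights $\Gamma_t^s$ depend only on the uniformly bounded derivatives of $f$. Taking $r=t$ and then expectations yields
\[\E[Z_t]=\E[\Gamma_t^T D_t\xi]+\E\Big[\int_t^T\Gamma_t^s D_t f(s,Y_s,Z_s)\,ds\Big]+\alpha_t\,\E\Big[\int_t^T\Gamma_t^s k_s\,ds\Big].\]
The last term is the contribution of $K$ and is strictly positive since $\Gamma_t^s,k_s>0$; the middle term is $O(T-t)$ because $|D_t f|\le d$ and $\Gamma$ has an integrable bound.

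The heart of the argument is a sharp lower bound for $\E[\Gamma_t^T D_t\xi]$ reproducing the bracketed quantities in the hypotheses. Writing $D_t\xi=D_t\xi^+-D_t\xi^-$ and factorising $\Gamma_t^s=e^{\int_t^s\partial_Y f\,dr}\,\tilde\Gamma_t^s$, I bound the drift factor below by $e^{(T-t)\inf\partial_Y f}$ on the positive part and above by $e^{(T-t)\sup\partial_Y f}$ on the negative part. In case 1 ($\partial_Z f=0$, so $\tilde\Gamma_t^T=1$) this already produces $-e^{(T-t)\sup\partial_Y f}\E[D_t\xi^-]+e^{(T-t)\inf\partial_Y f}\E[D_t\xi^+]$. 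In case 2 I would apply a reverse-Hölder inequality with exponent $p_+(t)$ on the positive part and Hölder's inequality with exponent $p_-(t)$ on the negative part, estimating the resulting moments of $\tilde\Gamma_t^T$ through Lemma \ref{lemma_const}; the conjugate exponents $\tfrac1{p_--1}$ and $\tfrac{p_+}{1-p_+}$ that arise are exactly those in the statement, so $\E[\Gamma_t^T D_t\xi]$ is bounded below by the displayed bracket $A(t)$ in each hypothesis.

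Combining, $\E[Z_t]\ge A(t)-C(T-t)+\alpha_t\,\E[\int_t^T\Gamma_t^s k_s\,ds]$, so it suffices to pick $\alpha\ge0$ (which also keeps $K$ nondecreasing) with $\int_0^T\alpha_t^2dt<\infty$ making the $K$-term dominate $\nu(t)-A(t)+C(T-t)$ for every $t<T$; at $t=T$ the constraint reduces to the standing hypothesis $\nu(T)\le\E[D_T\xi]$ because $Z_T=D_T\xi$. The \textbf{main obstacle} is precisely this selection of $\alpha$. The hypotheses ensure $(\nu(t)-A(t))/(T-t)$ is upper bounded, which invites a large constant $\alpha$; this works transparently in case 1, where $\E[\int_t^T\Gamma_t^s k_s\,ds]\ge c_0(T-t)$ uniformly. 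In case 2, however, $\tilde\Gamma_t^s$ is genuinely random and negatively correlated with $k_s$ (one computes $\E[\tilde\Gamma_t^s k_s]=e^{\alpha\beta(s-t)}$ when $\partial_Z f\equiv\beta$), so enlarging $\alpha$ inflates the variance of $k$ and makes the effective boost $\alpha_t\E[\int\Gamma k]$ \emph{saturate} rather than grow linearly; choosing $\alpha$ quantitatively (moderate, possibly $t$-dependent, and mutually compatible across all $t$ through the path-dependence of $k$) is the delicate step I would expect to demand the most care.
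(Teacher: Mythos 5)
Your Case 1 and your treatment of the $\xi$-term in Case 2 (H\"older/reverse H\"older with exponents $p_\pm(t)$, estimated via Lemma \ref{lemma_const}) coincide with the paper's proof. The genuine gap is in Case 2, and it sits exactly at the point you flag as ``delicate'': with the plain stochastic exponential $k_s=e^{\int_0^s\alpha_r dB_r-\frac12\int_0^s\alpha_r^2 dr}$ the construction cannot be completed at all. Your own computation $\E[\tilde\Gamma_t^s k_s]=e^{\alpha\beta(s-t)}$ (for $\partial_Z f\equiv\beta$) shows that when $\beta<0$ the boost $\alpha\int_t^T\E[\tilde\Gamma_t^s k_s]\,ds$ converges to $1/|\beta|$ as $\alpha\to+\infty$; so no choice of $\alpha$, however large or time-dependent, can dominate an arbitrary upper-bounded quantity $\big(\nu(t)-A(t)\big)/(T-t)$. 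Flagging this obstacle is not the same as resolving it: as written, your argument proves only Case 1.

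The paper's resolution is to change the process $K$ itself, not merely its parameter: for $p>1$ it takes $k^p_s=e^{\int_0^s\alpha(u)dB_u-\frac{1}{2p}\int_0^s\alpha(u)^2du}$ (drift correction $\tfrac{1}{2p}$ instead of $\tfrac12$, with $\alpha\ge 0$). Then $D_tk^p_s=\alpha(t)k^p_s\mathbf{1}_{t\le s}$ and, crucially, $(k^p_s)^{1/p}$ is exactly the exponential martingale of $\alpha/p$, so $\E[|D_tk^p_s|^{1/p}]^p=\alpha(t)$. Applying reverse H\"older to the pair $(\tilde\Gamma_t^s,D_tk^p_s)$ with exponents $\tfrac{1}{1-p}$ and $\tfrac1p$, and bounding $\E[|\tilde\Gamma_t^s|^{1/(1-p)}]^{1-p}$ from below by Lemma \ref{lemma_const}, yields
\begin{align*}
\E[\tilde\Gamma_t^s D_tk^p_s]\ \ge\ e^{\frac{p}{1-p}\frac{\sup|\partial_Z f|^2}{2}(s-t)}\,\alpha(t),
\end{align*}
a lower bound \emph{linear} in $\alpha(t)$ with a deterministic prefactor bounded away from zero on $[0,T]$. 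This decoupling is what defuses the negative correlation you identified: the entire correlation cost is absorbed into a constant independent of $\alpha$, after which the same choice of a large constant $\alpha$ as in Case 1 closes the argument.
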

\begin{proof}
Let $\alpha:[0,T]\longrightarrow\R$ some deterministic c\`adl\`ag function, $k_s=e^{\int_0^s\alpha(u)dB_u-\frac{1}{2}\int_0^s\alpha(u)^2du}$, $dK_s=k_sds$. We have $K\in\Ac^2$. Under Assumption $(\mathbf{A})$ and noticing that $f(t,0,0)+k_t\in\Hc^4$, for example by using Minkowski inequality, we get
\begin{equation*}
    D_rY_t = D_r\xi+\int_t^T(D_r f(s,Y_s,Z_s)+\partial_Y f(s,Y_s,Z_s)D_rY_s+\partial_Z f(s,Y_s,Z_s)D_rZ_s)ds -\int_t^T D_rZ_s dB_s+\int_t^T D_rk_sds
\end{equation*}
for $0\leq r\leq t\leq T$, so
\begin{equation*}
    \mathbb E[Z_t] = \mathbb E[D_tY_t] = \E[\Gamma_t^TD_t\xi+\int_t^T \Gamma_t^s D_tf(s,Y_s,Z_s)ds+\int_t^T\Gamma_t^s D_tk_sds]
\end{equation*}
with $\Gamma_t^s = e^{\int_t^s \partial_Y f(r,Y_r,Z_r)dr+\partial_Z f(r,Y_r,Z_r)dB_r-\frac{\partial_Z f(r,Y_r,Z_r)^2}{2}dr}$.\\

\textit{Case 1. No dependence with respect to $Z$ in the driver.}\\

We have
\begin{align*}
    \mathbb E[Z_t]\geq& -e^{(T-t)\sup \partial_Y f }\E[D_t\xi^-]+e^{(T-t)\inf \partial_Y f }\E[D_t\xi^+]-d e^{(T-t)\sup |\partial_Y f| }(T-t) \\
    &+\int_t^T[\alpha(t)e^{(s-t)\inf \partial_Y f }\mathbf{1}_{\alpha(t)\geq 0}+\alpha(t)e^{(s-t)\sup \partial_Y f }\mathbf{1}_{\alpha(t)< 0}]ds\\
    \geq& -e^{(T-t)\sup \partial_Y f }\E[D_t\xi^-]+e^{(T-t)\inf \partial_Y f }\E[D_t\xi^+]-d e^{(T-t)\sup |\partial_Y f| }(T-t) \\
    &+\alpha(t)^+i(t)-\alpha(t)^-s(t)
\end{align*}
where, for $0\leq t\leq  T$, 
\begin{align*}
    s(t)=\begin{cases}
        \frac{e^{(T-t)\sup \partial_Y f }-1}{\sup \partial_Y f }&\text{ if }\sup \partial_Y f\neq 0\\
        T-t&\text{ otherwise}
    \end{cases}
\end{align*}
and
\begin{align*}
    i(t)=\begin{cases}
        \frac{e^{(T-t)\inf \partial_Y f }-1}{\inf \partial_Y f }&\text{ if }\inf \partial_Y f\neq 0\\
        T-t&\text{ otherwise}.
    \end{cases}
\end{align*}
Since $\mathbb E[Z_T]=\mathbb E[D_T\xi]\geq \nu(T)$, it remains to choose $\alpha$ wisely so that the constraint is satisfied on $[0,T)$.
We thus choose
\begin{align*}
    \alpha(t) = \big(\sup_{0\leq t<T}\frac{1}{i(t)}\big[\nu(t)-(-e^{(T-t)\sup \partial_Y f }\E[D_t\xi^-]+e^{(T-t)\inf \partial_Y f }\E[D_t\xi^+]-d e^{(T-t)\sup |\partial_Y f| }(T-t))\big]\big)^+
\end{align*}
for all $0\leq t\leq T$.
Given the assumption and the definition of $i$ we have $0\leq \alpha<\infty$, and, by construction, $\E[Z_t]\geq \nu(t)$.\\

\textit{Case 2. Z dependence in the driver.}\\

Let now $p>1$ and define $k^p_s=e^{\int_0^s\alpha(u)dB_u-\frac{1}{2p}\int_0^s\alpha(u)^2du}$, $dK_s=k^p_sds$ with $\alpha$ being nonnegative. We have $K\in\Ac^2$. Following the lines of the proof of Case 1., we similarly get 
\begin{align*}
 \E[Z_t]\geq& -e^{(T-t)\sup \partial_Y f }\E[\tilde\Gamma_t^T D_t\xi^-]+e^{(T-t)\inf \partial_Y f }\E[\tilde\Gamma_t^T D_t\xi^+]-d e^{(T-t)\sup |\partial_Y f| }(T-t) \\
    &+\int_t^T\E[\tilde\Gamma_t^sD_tk^p_s]e^{(s-t)\inf \partial_Y f }ds
\end{align*}
with $\tilde\Gamma_t^s = e^{\int_t^s \partial_Z f(r,Y_r,Z_r)dB_r-\frac{\partial_Z f(r,Y_r,Z_r)^2}{2}dr}$. 

Applying Hölder's inequality  for some $q>1$, we get 
$$\E[\tilde\Gamma_t^T D_t\xi^-]\leq \E[|\tilde\Gamma_t^T|^\frac{q}{q-1}]^{\frac{1-q}{q}}\E[|D_t\xi^-|^{q}]^{\frac{1}{q}}\leq e^{\frac{1}{q-1}\frac{\sup |\partial_Z f|^2}{2}(T-t)}\E[|D_t\xi^-|^{q}]^{\frac{1}{q}}.$$

Now, from Reverse Hölder inequality together with Lemma \ref{lemma_const} we get for any $q>1$ $$\E[\tilde\Gamma_t^T D_t\xi^+]\geq \E[|\tilde\Gamma_t^T|^\frac{1}{1-q}]^{1-q}\E[|D_t\xi^+|^{\frac{1}{q}}]^q\geq e^{\frac{q}{1-q}\frac{\sup |\partial_Z f|^2}{2}(T-t)}\E[|D_t\xi^+|^{\frac{1}{q}}]^q,$$ 
and 
$$\E[\tilde\Gamma_t^s D_tk^p_s]\geq \E[|\tilde\Gamma_t^s|^\frac{1}{1-p}]^{1-p}\E[|D_tk^p_s|^{\frac{1}{p}}]^p\geq e^{\frac{p}{1-p}\frac{\sup |\partial_Z f|^2}{2}(s-t)}\alpha(t).$$ 
Consequently, for any $p_+,p_-:[0,T]\mapsto (1,+\infty)$,
\begin{align*}
    \E[Z_t]\geq& -e^{(T-t)\sup \partial_Y f }\E[\tilde\Gamma_t^T D_t\xi^-]+e^{(T-t)\inf \partial_Y f }\E[\tilde\Gamma_t^T D_t\xi^+]-d e^{(T-t)\sup |\partial_Y f| }(T-t) \\
    &+\int_t^Te^{\frac{p}{1-p}\frac{\sup |\partial_Z f|^2}{2}(s-t)}\alpha(t)e^{(s-t)\inf \partial_Y f }ds\\
    \geq& -e^{(T-t)\sup \partial_Y f }e^{\frac{1}{p_-(t)-1}\frac{\sup |\partial_Z f|^2}{2}(T-t)}\E[|D_t\xi^-|^{p_-(t)}]^{\frac{1}{p_-(t)}}\\
    &+e^{(T-t)\inf \partial_Y f }e^{\frac{p_+(t)}{1-p_+(t)}\frac{\sup |\partial_Z f|^2}{2}(T-t)}\E[|D_t\xi^+|^{\frac{1}{p_+(t)}}]^{p_+(t)}\\
    &-d e^{(T-t)\sup |\partial_Y f| }(T-t)+\alpha(t)i(t)
\end{align*}
with
\begin{align*}
    i(t)=\begin{cases}
        \frac{e^{(\frac{p}{1-p}\frac{\sup |\partial_Z f|^2}{2}+\inf \partial_Y f)(T-t)}-1}{\frac{p}{1-p}\frac{\sup |\partial_Z f|^2}{2}+\inf \partial_Y f}&\text{ if }(\frac{p}{1-p}\frac{\sup |\partial_Z f|^2}{2}+\inf \partial_Y f)\neq 0\\
        T-t&\text{ otherwise}.
    \end{cases}
\end{align*}
so the same argument applies, with
\begin{align*}
    \alpha(t) = \sup_{0\leq t<T}\frac{1}{i(t)}\big[\nu(t)-(&-e^{(T-t)\sup \partial_Y f }e^{\frac{1}{p_-(t)-1}\frac{\sup |\partial_Z f|^2}{2}(T-t)}\E[|D_t\xi^-|^{p_-(t)}]^{\frac{1}{p_-(t)}}\\
    &+e^{(T-t)\inf \partial_Y f }e^{\frac{p_+(t)}{1-p_+(t)}\frac{\sup |\partial_Z f|^2}{2}(T-t)}\E[|D_t\xi^+|^{\frac{1}{p_+(t)}}]^{p_+(t)}-d e^{(T-t)\sup |\partial_Y f| }(T-t))\big]
\end{align*}
for all $0\leq t\leq T$.
\end{proof}
\begin{remark}
For a constraint of the type $\E[Z_t]\leq \nu(t)$ the assumptions in Proposition \ref{th::exi} become $\nu(T)\geq \E[D_T\xi]$ and
\begin{enumerate}
    \item if $\partial_Z f=0$,
    \begin{align*}
        t\in[0,T)\mapsto\frac{1}{T-t}\big[\nu(t)-(-e^{(T-t)\inf \partial_Y f }\E[D_t\xi^-]+e^{(T-t)\sup \partial_Y f }\E[D_t\xi^+])\big]
    \end{align*}
    lower bounded;
    \item otherwise
    \begin{align*}
        t\in[0,T)\mapsto&\frac{1 }{T-t}\big[\nu(t)-(-e^{(T-t)\inf \partial_Y f }e^{\frac{1}{p_-(t)-1}\frac{\sup |\partial_Z f|^2}{2}(T-t)}\E[|D_t\xi^-|^{p_-(t)}]^{\frac{1}{p_-(t)}}\\
        &+e^{(T-t)\sup \partial_Y f }e^{\frac{p_+(t)}{1-p_+(t)}\frac{\sup |\partial_Z f|^2}{2}(T-t)}\E[|D_t\xi^+|^{\frac{1}{p_+(t)}}]^{p_+(t)})\big]
    \end{align*}
    lower bounded for some functions $p_+,p_-:[0,T)\longrightarrow(1,+\infty)$.
\end{enumerate}

\end{remark}

We now turn to more general constraints. We can derive from the previous proposition sufficient conditions in the case of concave or convex constraints.
\begin{theorem}\label{th::thex}
Let $\ell$ in $C^{1,2}(\R^+\times \R)$, with $\partial_x \ell\geq K\in\R^*_+$ and $-\ubar C\leq \frac{\partial_{xx}^2 \ell}{(\partial_x \ell)^2}\leq C$. Assume $|\partial_t \ell|\leq \bar M\in \R_+$ and either $d=0$ or $\partial_x \ell\leq \bar K\in\R$, and $D_t\xi$ is bounded. Let one of the two following condition be satisfied.
\begin{itemize}
    \item  $\ell$ is convex and the conditions of Proposition \ref{th::exi} associated with the constraint $\E[Z_t]\geq \ell(t,.)^{-1}(0)$ are satisfied;
    \item  $\ell$ is concave with
    $0\leq \E[\ell(T,D_T\xi)]$ and either\\
    \begin{enumerate}
        \item $\partial_Z f=0$, and
                \begin{align*}
                 t\in[0,T)\mapsto\frac{1}{T-t}\big[-\E[\ell(T,-e^{(T-t)\sup \partial_Y f }D_t\xi^-+e^{(T-t)\inf \partial_Y f }D_t\xi^+)]\big]
                \end{align*}
                is upper bounded,\\
                
            or\\
        \item  there exist two maps $p_+,p_-$ from $[0,T)$ into $(1,+\infty)$ such that
            \begin{align*}
                t\in[0,T)\mapsto&\frac{1 }{T-t}\big[-(-e^{\frac{1}{p_-(t)-1}\frac{\sup |\partial_Z f|^2}{2}(T-t)}\E[|\chi_t^-|^{p_-(t)}]^{\frac{1}{p_-(t)}}\\
                &+e^{\frac{p_+(t)}{1-p_+(t)}\frac{\sup |\partial_Z f|^2}{2}(T-t)}\E[|\chi_t^+|^{\frac{1}{p_+(t)}}]^{p_+(t)})\big]
            \end{align*}
            is upper bounded, with $\chi_t=\ell(T,-e^{(T-t)\sup \partial_Y f }D_t\xi^-+e^{(T-t)\inf \partial_Y f }D_t\xi^+)$.
    \end{enumerate}
    \comment{\item $C>0$: there exists some deterministic c\`adl\`ag function $u$ on $[0,T]$ such that
        \begin{align*}
            0\leq&-\bar K de^{(T-t)\sup\partial_Y f}(T-t)+l(-e^{(T-t)\sup|\partial_Y f|}\sup|D_.\xi|)+Ke^{-(T-t)\sup|\partial_Y f|} u(t)(T-t)\\
            &+\frac{1}{2}(Ke^{-(T-t)\sup|\partial_Y f|} u(t))^2(T-t)^2\int_0^tu(r)^2dr-\frac{C}{2}(Ke^{-(T-t)\sup|\partial_Y f|} u(t))^2\int_t^T(T-r)^2u(r)^2dr\\
            &+Ke^{-(T-t)\sup|\partial_Y f|} u(t)(\inf\partial_Z f)\int_t^T(T-r)u(r)dr
        \end{align*}}
\end{itemize}
\end{theorem}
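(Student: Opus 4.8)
In both cases I would build a supersolution of the stochastic-exponential form used in Proposition \ref{th::exi}, i.e.\ with $K_t=\int_0^tk_s\,ds$ for $k_s=e^{\int_0^s\alpha(u)\,dB_u-\frac12\int_0^s\alpha(u)^2du}$ (or its $k^p$ variant when $\partial_Zf\neq0$), so that the whole problem reduces to selecting the deterministic nonnegative function $\alpha$ for which $\E[\ell(t,Z_t)]\geq0$ on $[0,T]$. As in that proposition, Assumption $(\mathbf A)$ gives $Z_t=D_tY_t=\E_t[\Gamma_t^TD_t\xi+\int_t^T\Gamma_t^s(D_tf+D_tk_s)\,ds]$, and the difficulty is to convert the nonlinear constraint into a linear lower bound from which $\alpha$ can be read off.

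For the convex case this is immediate from Jensen's inequality: since $\ell(t,\cdot)$ is convex, $\E[\ell(t,Z_t)]\geq\ell(t,\E[Z_t])$, and since $\partial_x\ell\geq K>0$ the map $\ell(t,\cdot)$ is a strictly increasing bijection of $\R$, so $\E[\ell(t,Z_t)]\geq0$ follows from the linear $Z$-mean constraint $\E[Z_t]\geq\ell(t,\cdot)^{-1}(0)$. By hypothesis the conditions of Proposition \ref{th::exi} hold for this constraint with threshold $\nu(t)=\ell(t,\cdot)^{-1}(0)$, and applying that proposition yields the supersolution.

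The concave case is the substantive one, where Jensen points the wrong way, so I would linearise $\ell$ along the flow instead. Fixing $t$ and setting $\tilde Y_s:=\ell(s,e^{\int_t^s\partial_Yf\,du}D_tY_s)$ and $\tilde Z_s:=\partial_x\ell\,e^{\int_t^s\partial_Yf\,du}D_tZ_s$, Itô's formula (exactly the computation of the impossibility proposition, now carrying the extra $\int_t^TD_tk_s\,ds$ term) shows that $\tilde Y$ solves a quadratic BSDE with terminal value $\ell(T,e^{\int_t^T\partial_Yf\,du}D_t\xi)$ whose generator contains the curvature term $-\tfrac12\frac{\partial_{xx}^2\ell}{(\partial_x\ell)^2}|\tilde Z_s|^2$. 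Concavity makes this term nonnegative, so dropping it only decreases the generator; bounding the remaining drift from below through $|\partial_t\ell|\leq\bar M$, $K\leq\partial_x\ell\leq\bar K$, $|D_tf|\leq d$ and $\alpha\geq0$ produces a \emph{linear} generator $\partial_Zf(s,Y_s,Z_s)\,z+\eta_s$ with $\eta_s=-\bar M-\bar K d\,e^{(s-t)\sup\partial_Yf}+K\,e^{(s-t)\inf\partial_Yf}\alpha(t)k_s$. The comparison theorem for quadratic BSDEs (Theorem 2.6 of \cite{kobyl}) then gives $\overline Y_s\leq\tilde Y_s$ for the associated linear BSDE $\overline Y$, and at $s=t$ this reads $\ell(t,Z_t)\geq\overline Y_t$. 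Solving the linear BSDE by Girsanov, with density the stochastic exponential $\tilde\Gamma$ of $\partial_Zf$, yields $\E[\ell(t,Z_t)]\geq\E[\tilde\Gamma_t^T\ell(T,e^{\int_t^T\partial_Yf\,du}D_t\xi)]+\int_t^T\E[\tilde\Gamma_t^s\eta_s]\,ds$.

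It then remains to estimate this bound. Monotonicity of $\ell(T,\cdot)$ and the pathwise inequality $e^{\int_t^T\partial_Yf\,du}D_t\xi\geq-e^{(T-t)\sup\partial_Yf}D_t\xi^-+e^{(T-t)\inf\partial_Yf}D_t\xi^+$ give $\tilde\Gamma_t^T\ell(T,e^{\int_t^T\partial_Yf\,du}D_t\xi)\geq\tilde\Gamma_t^T\chi_t$; splitting $\chi_t=\chi_t^+-\chi_t^-$ and applying Hölder to the negative part and the reverse Hölder inequality to the positive part, each controlled by Lemma \ref{lemma_const}, reproduces exactly the two exponential-moment expressions of condition (b) (and collapses to $\E[\chi_t]$ of condition (a) when $\partial_Zf=0$ and $\tilde\Gamma\equiv1$). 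The control term contributes $K\alpha(t)\,i(t)$ after the same reverse-Hölder estimate on $\E[\tilde\Gamma_t^sk_s]$ as in Proposition \ref{th::exi}, $i$ being the function defined there. Hence $\E[\ell(t,Z_t)]\geq0$ reduces to $K\alpha(t)i(t)$ dominating an explicit function of $t$, and choosing $\alpha$ as the corresponding positive-part supremum is legitimate (finite and nonnegative) precisely under conditions (a)/(b), the endpoint $t=T$ being handled by the standing assumption $\E[\ell(T,D_T\xi)]\geq0$. The step I expect to be most delicate is the comparison itself: I must ensure $\tilde Y$ is bounded---this is why $D_t\xi$ is assumed bounded and $\partial_x\ell$ bounded when $d\neq0$---so that the quadratic-versus-linear comparison of \cite{kobyl} genuinely applies; the remainder is lengthy but routine Hölder bookkeeping matching the stated thresholds.
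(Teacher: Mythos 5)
Your proposal is correct and takes essentially the same route as the paper's proof: Jensen's inequality reducing the convex case to Proposition \ref{th::exi}, and for the concave case the It\^o change of variables on $\ell$ along the flow, the Kobylanski comparison after dropping the nonnegative curvature term (the paper phrases this as taking $C=0$), the explicit Girsanov/linear-BSDE lower bound, the H\"older and reverse H\"older estimates of Lemma \ref{lemma_const}, and the same choice of $\alpha$. Even the boundedness caveat you flag for the comparison step is precisely the point the paper itself invokes (and treats no more rigorously than you do).
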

\begin{proof}

If $\ubar C\leq 0$, \textit{i.e.} if $\ell(t,.)$ is convex, then $\E[\ell(t,Z_t)]\geq \ell(t,\E[Z_t])$. As $\ell(t,.)$ is strictly increasing and continuous the constraint $\E[\ell(t,Z_t)]\geq 0$ is satisfied if $\E[Z_t]\geq \ell(t,.)^{-1}(0)$. This can be done under the assumptions of Proposition \ref{th::exi}.\\

In the following we assume $\ubar C\geq 0$. Let $\alpha:[0,T]\longrightarrow\R$ some non-negative deterministic c\`adl\`ag function, $k_s=e^{\int_0^s\alpha(u)dB_u-\int_0^s\frac{\alpha(u)^2}{2}du}$, $dK_s=k_sds$. We have $K\in\Ac^2$. Under Assumption $(\mathbf{A})$ and noticing that $f(t,\omega,0,0)+k_t\in\Hc^4$ we get
\begin{equation*}
    D_rY_t = D_r\xi+\int_t^T(D_r f(s,Y_s,Z_s)+\partial_Y f(s,Y_s,Z_s)D_rY_s+\partial_Z f(s,Y_s,Z_s)D_rZ_s)ds -\int_t^T D_rZ_s dB_s+\int_t^T D_rk_sds
\end{equation*}
for $0\leq r\leq t\leq T$, so, using Itô's Lemma we get
\begin{align*}
    &\ell(t,e^{\int_r^t\partial_Y f(s,Y_s,Z_s)ds}D_rY_t) = \ell(T,e^{\int_r^T\partial_Y f(s,Y_s,Z_s)ds}D_r\xi)\\
    &+\int_t^T\big( \partial_x \ell(s,e^{\int_r^s\partial_Y f(u,Y_u,Z_u)du}D_rY_s)e^{\int_r^s\partial_Y f(u,Y_u,Z_u)du}(D_r f(s,Y_s,Z_s)+\partial_Z f(s,Y_s,Z_s)D_rZ_s)\\
    &-\partial_x \ell(s,e^{\int_r^s\partial_Y f(u,Y_u,Z_u)du}D_rY_s)-\frac{1}{2}\partial_{xx}^2 \ell(s,e^{\int_r^s\partial_Y f(u,Y_u,Z_u)du}D_rY_s)(e^{\int_r^s\partial_Y f(u,Y_u,Z_u)du}D_rZ_s)^2\big)ds\\
    &-\int_t^T \partial_x \ell(s,e^{\int_r^s\partial_Y f(u,Y_u,Z_u)du}D_rY_s) e^{\int_r^s\partial_Y f(u,Y_u,Z_u)du}D_rZ_s dB_s\\
    &+\int_r^T \partial_x \ell(s,e^{\int_r^s\partial_Y f(u,Y_u,Z_u)du}D_rY_s)e^{\int_r^s\partial_Y f(u,Y_u,Z_u)du} D_rk_sds.
\end{align*}
As $\partial_x\ell>0$ we let $\tilde Z^r_s=\partial_x \ell(s,e^{\int_r^s\partial_Y f(u,Y_u,Z_u)du}D_rY_s) e^{\int_r^s\partial_Y f(u,Y_u,Z_u)du}D_rZ_s$ and $\tilde Y^r_s = \ell(s,e^{\int_r^s\partial_Y f(u,Y_u,Z_u)du}D_rY_s)$ for $r\leq s\leq T$, so we can write
\begin{align*}
    &\tilde Y_t^r = \ell(T,e^{\int_r^T\partial_Y f(s,Y_s,Z_s)ds}D_r\xi)\\
    &+\int_t^T\big( \partial_x \ell(s,e^{\int_r^s\partial_Y f(u,Y_u,Z_u)du}D_rY_s)e^{\int_r^s\partial_Y f(u,Y_u,Z_u)du}D_r f(s,Y_s,Z_s)-\partial_t \ell(s,e^{\int_r^s\partial_Y f(u,Y_u,Z_u)du}D_rY_s)\\
    &+\partial_Z f(s,Y_s,Z_s)\tilde Z_s^r-\frac{1}{2}\frac{\partial_{xx}^2 \ell(s,e^{\int_r^s\partial_Y f(u,Y_u,Z_u)du}D_rY_s)}{\partial_x \ell(s,e^{\int_r^s\partial_Y f(u,Y_u,Z_u)du}D_rY_s)^2}(\tilde Z_s^r)^2\big)ds\\
    &-\int_t^T \tilde Z_s^r dB_s+\int_t^T \partial_x \ell(s,e^{\int_r^s\partial_Y f(u,Y_u,Z_u)du}D_rY_s)e^{\int_r^s\partial_Y f(u,Y_u,Z_u)du} D_rk_sds.
\end{align*}
Using Theorem 2.6 in \cite{kobyl} and verifying that $\tilde Y_t^r$ is bounded as $D_rY_t$ is bounded because $D_r\xi$ is bounded, we have $\tilde Y_t^r\geq \ubar Y_t^r$ where $(\ubar Y_t^r,\ubar Z_t^r)$ solves
\begin{align*}
    &\ubar Y_t^r = \ell(T,e^{\int_r^T\partial_Y f(s,Y_s,Z_s)ds}D_r\xi)\\
    &+\int_t^T\big( -\bar K de^{(T-r)\sup\partial_Y f}-M+\partial_Z f(s,Y_s,Z_s)\ubar Z_s^r-\frac{C}{2}(\ubar Z_s^r)^2\big)ds\\
    &-\int_t^T \ubar Z_s^r dB_s+\int_t^T Ke^{-(T-r)\sup|\partial_Y f|} D_rk_sds,
\end{align*}
assuming $\ubar Y_t^r$ is bounded.\\

If $C\leq 0$ we can take $C=0$ and we get directly
\begin{align*}
    &\E[\ell(t,Z_t)]\geq \E[\int_t^T \tilde\Gamma_t^s(-\bar K de^{(T-t)\sup\partial_Y f}-M+Ke^{-(T-t)\sup|\partial_Y f|} D_tk_s)ds +\tilde\Gamma_t^T\ell(T,e^{\int_t^T\partial_Y f(s,Y_s,Z_s)ds}D_t\xi)]
\end{align*}
and the same result as in the linear case applies. It is easy to verify that in this case $\ubar Y^t$ is bounded.
\end{proof}

\begin{remark}
Let us do an example. Let $\xi=-\int_0^T (T-t)dB_t$, $f=0$ and $l=Id$. Then $\E[D_t\xi]=t-T< 0$ if $t<T$ and $\E[D_T\xi]=0$. Take $k_t=e^{B_t-\frac{t}{2}}$. We get $\E[Z_t]=-(T-t)+\E[\int_t^Te^{B_s-\frac{s}{2}}ds]= 0$. To solve for $Y,Z$, we write $Y_t=-\int_0^T (T-s)dB_s-\int_t^T Z_s dB_s+\int_t^T e^{B_s-\frac{s}{2}}ds$ so by taking the expectation with respect to $\Fc_t$, we find $Y_t=e^{B_t-\frac{t}{2}}(T-t)-\int_0^t(T-s)dB_s$ so $Z_t=(e^{B_t-\frac{t}{2}}-1)(T-t)$, which can be negative. This implies that a comparison theorem cannot hold, as there exists a supersolution to the penalized BSDEs given in the preliminary example of Section \ref{section:impossibility} which is not bounded from below by the solutions of those BSDEs, which diverge. See \cite{buckd} and \cite{agram2019meanfield} for more discussions on this topic.
\end{remark}
\begin{remark}
Let $\xi=e^{B_T-\frac{T}{2}}$, $f(t,w,y,z)=e^{-B_t-\frac{t}{2}}$ and $\ell=Id-1$. According to the example of Section \ref{subsec::weakvsstrong} there is no supersolution with $Z_t\geq1$. However, note that $\E[D_t\xi]=1$ and by taking $k_t=e^{B_t-\frac{t}{2}}$, we get $\E[Z_t]=1+\E[\int_t^T(-e^{-B_s-\frac{s}{2}}+e^{B_s-\frac{s}{2}})ds]= 1$ there is a supersolution for the relaxed problem. We have in particular $Y_t=e^{B_T-\frac{T}{2}}-\int_t^T Z_s dB_s+\int_t^T (e^{-B_s-\frac{s}{2}}+e^{B_s-\frac{s}{2}})ds$ so by taking the expectation with respect to $\Fc_t$, we find $Y_t=e^{B_t-\frac{t}{2}}+(T-t)(e^{-B_t-\frac{t}{2}}+e^{B_t-\frac{t}{2}})$ and $Z_t=e^{B_t-\frac{t}{2}}+(T-t)(-e^{-B_t-\frac{t}{2}}+e^{B_t-\frac{t}{2}})$.
\end{remark}

\begin{remark}
Note that Proposition \ref{th::exi} enables us to construct supersolutions in the case where $\ell(t,.)$ is bounded from below by some increasing linear function which depends on $t$, \textit{i.e.} when $\ell(t,x)\geq a(t)x+b(t),$ for $a:[0,T]\longrightarrow \mathbb R^+$ and $b:[0,T]\longrightarrow  \mathbb R\setminus\{0\}$. Hence, $\mathbb E[\ell(t,Z_t)]\geq a(t)\mathbb E[Z_t]+b(t)$. Setting $\nu:=\frac{a}b,$ sufficient conditions ensuring the existence of supersolution of a $Z-$mean constrained BSDE can be deduced from Proposition \ref{th::exi}.\end{remark}

\subsection{Discussion on sufficient conditions.}

In this section, we give some simple sufficient conditions which imply those of Proposition \ref{th::exi} when $\ell$ is linear. Similar conditions can be obtained to relax the assumptions of Theorem \ref{th::thex} in a more general case.\\

First assume that $\partial_Z f=0$. Then, if the maps $\nu$, $t\mapsto\E[D_t\xi^+]$ and $t\mapsto\E[D_t\xi^-]$ are continuous at $t=T$, then the assumption $\nu(T)\leq \E[D_T\xi]$ is redundant with the boundedness assumption in Proposition \ref{th::exi}. If $\partial_Z f\neq 0$ the same result holds if $p_-$ and $p_+$ have limits at $t=T$ and $\nu$, $\E[|D_t\xi^+|^{\frac{1}{p_+(t)}}]^{p_+(t)}$ and $\E[|D_t\xi^-|^{p_-(t)}]^{\frac{1}{p_-(t)}}$ are continuous at $t=T$.\\

We now prove that the conditions are met under some continuity assumptions if the constraints are strictly satisfied at time $T$.
\begin{proposition}
Let $\nu$ be some real function on $[0,T]$ and $\ell(t,.)=Id(\cdot)-\nu(t)$. Assume also that $\nu$ is continuous and that assume that one of the following condition is satisfied,
\begin{enumerate}
    \item  $\partial_Z f=0$, the maps $t\mapsto\E[D_t\xi^+]$ and $t\mapsto\E[D_t\xi^-]$ are continuous, $\nu(T)< \E[D_T \xi]$;
    \item there exist two continuous map $p_-$ and $p_+$ on $[0,T)$ with left limits at $T-$ satisfying $p_-(T-)>1$ and $p_+(T-)>1$, $t\mapsto\E[|D_t\xi^+|^{\frac{1}{p_+(t)}}]^{p_+(t)}$ and $t\mapsto\E[|D_t\xi^-|^{p_-(t)}]^{\frac{1}{p_-(t)}}$ are continuous, and $\nu(T)<-\E[|D_T\xi^-|^{p_-(T-)}]^{\frac{1}{p_-(T-)}}+\E[|D_t\xi^+|^{\frac{1}{p_+(T-)}}]^{p_+(T-)}$, otherwise.
\end{enumerate}
Then the conditions of Proposition \ref{th::exi} are satisfied.
\end{proposition}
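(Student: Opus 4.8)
The plan is to show that the continuity hypotheses, together with the strict inequality at $t=T$, are exactly what is needed to upgrade the pointwise data into the \emph{upper boundedness} required by Proposition~\ref{th::exi}; the only delicate point is the behaviour of the relevant quotient as $t\uparrow T$, where the factor $1/(T-t)$ diverges.

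In the case $\partial_Z f=0$ I would set
\[
g(t):=\nu(t)-\Big(-e^{(T-t)\sup\partial_Y f}\,\E[D_t\xi^-]+e^{(T-t)\inf\partial_Y f}\,\E[D_t\xi^+]\Big),
\]
so that the map to be controlled is $t\mapsto g(t)/(T-t)$. Under the assumed continuity of $\nu$, $t\mapsto\E[D_t\xi^+]$ and $t\mapsto\E[D_t\xi^-]$, and since the exponential prefactors are continuous and equal $1$ at $t=T$, the function $g$ extends continuously to $[0,T]$ with $g(T)=\nu(T)-\big(-\E[D_T\xi^-]+\E[D_T\xi^+]\big)=\nu(T)-\E[D_T\xi]<0$ by hypothesis~(1). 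By continuity there is $\delta\in(0,T)$ with $g<0$ on $[T-\delta,T]$, hence $g(t)/(T-t)<0$ on $[T-\delta,T)$, so the quotient is bounded above by $0$ there; on the compact interval $[0,T-\delta]$ the quotient is continuous (the denominator being bounded below by $\delta$), hence bounded. Combining, $t\mapsto g(t)/(T-t)$ is upper bounded on $[0,T)$, which is precisely assumption~(1) of Proposition~\ref{th::exi}, and $\nu(T)<\E[D_T\xi]$ gives a fortiori $\nu(T)\leq\E[D_T\xi]$.

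Case~(2) is handled identically, taking for $g$ the bracketed numerator of assumption~(2) of Proposition~\ref{th::exi}, i.e. replacing $\E[D_t\xi^{\mp}]$ by the moment quantities $\E[|D_t\xi^-|^{p_-(t)}]^{1/p_-(t)}$ and $\E[|D_t\xi^+|^{1/p_+(t)}]^{p_+(t)}$ and inserting the extra exponential corrections $e^{\frac{1}{p_-(t)-1}\frac{\sup|\partial_Z f|^2}{2}(T-t)}$, $e^{\frac{p_+(t)}{1-p_+(t)}\frac{\sup|\partial_Z f|^2}{2}(T-t)}$. Since $p_\pm$ are continuous with left limits $p_\pm(T-)>1$, the quantity $1/(p_-(t)-1)$ stays bounded and every exponential factor tends to $1$ as $t\uparrow T$; with the assumed continuity of the two moment maps this yields
\[
g(T-)=\nu(T)-\Big(-\E[|D_T\xi^-|^{p_-(T-)}]^{\frac{1}{p_-(T-)}}+\E[|D_T\xi^+|^{\frac{1}{p_+(T-)}}]^{p_+(T-)}\Big)<0
\]
by hypothesis~(2), and the same neighbourhood-splitting argument gives upper boundedness.

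The main obstacle — indeed the only point needing care — is the limit $t\uparrow T$, where $1/(T-t)$ blows up. The key observation is that we need only an \emph{upper} bound, and the strict inequality at $T$ makes the numerator $g$ strictly negative near $T$; dividing a strictly negative number by a small positive one keeps it negative (the quotient in fact tends to $-\infty$), so there is no divergence to $+\infty$ to worry about. In Case~(2) one additionally checks that the exponential correction factors carrying the $Z$-dependence do not disturb this limit, which is guaranteed by $p_\pm(T-)>1$ keeping the exponents bounded as $T-t\to 0$.
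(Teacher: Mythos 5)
Your proof is correct and takes essentially the same approach as the paper: both show that the bracketed numerator extends continuously to $t=T$ with a strictly negative limit (using $p_\pm(T-)>1$ and continuity to make the exponential correction factors tend to $1$), so the quotient $\frac{g(t)}{T-t}$ is negative near $T$ and bounded on the compact complement, giving the required upper bound. Your write-up is in fact slightly more explicit than the paper's (which leaves the neighbourhood-splitting and compactness step implicit), and your remark that $\nu(T)<\E[D_T\xi]$ yields the standing assumption $\nu(T)\leq\E[D_T\xi]$ of Proposition \ref{th::exi} is a detail the paper glosses over.
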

\begin{proof}
If $\partial_Z f=0$ it is enough to prove that 
\begin{align*}
        \limsup_{t\longrightarrow T}\frac{1}{T-t}\big[\nu(t)-(-e^{(T-t)\sup \partial_Y f }\E[D_t\xi^-]+e^{(T-t)\inf \partial_Y f }\E[D_t\xi^+])\big]<\infty.
    \end{align*}
This is satisfied as 
\begin{align*}
    \nu(t)-(-e^{(T-t)\sup \partial_Y f }\E[D_t\xi^-]+e^{(T-t)\inf \partial_Y f }\E[D_t\xi^+])\underset{t\longrightarrow T}{\longrightarrow}\nu(T)-c(T)<0.
\end{align*}
If $\partial_Z f\neq 0$ it is enough to prove that 
\begin{align*}
        &\limsup_{t\longrightarrow T}\frac{1}{T-t}\big[\nu(t)-(-e^{(T-t)\sup \partial_Y f }e^{\frac{1}{p_-(t)-1}\frac{\sup |\partial_Z f|^2}{2}(T-t)}\E[|D_t\xi^-|^{p_-(t)}]^{\frac{1}{p_-(t)}}\\
        &+e^{(T-t)\inf \partial_Y f }e^{\frac{p_+(t)}{1-p_+(t)}\frac{\sup |\partial_Z f|^2}{2}(T-t)}\E[|D_t\xi^+|^{\frac{1}{p_+(t)}}]^{p_+(t)})\big]<\infty.
    \end{align*}
This is satisfied as 
\begin{align*}
    &\nu(t)-(-e^{(T-t)\sup \partial_Y f }e^{\frac{1}{p_-(t)-1}\frac{\sup |\partial_Z f|^2}{2}(T-t)}\E[|D_t\xi^-|^{p_-(t)}]^{\frac{1}{p_-(t)}}\\
        &+e^{(T-t)\inf \partial_Y f }e^{\frac{p_+(t)}{1-p_+(t)}\frac{\sup |\partial_Z f|^2}{2}(T-t)}\E[|D_t\xi^+|^{\frac{1}{p_+(t)}}]^{p_+(t)})\\
    &\underset{t\longrightarrow T}{\longrightarrow}\nu(T)-(-\E[|D_t\xi^-|^{p_-(T-)}]^{\frac{1}{p_-(T-)}}+\E[|D_T\xi^+|^{\frac{1}{p_+(T-)}}]^{p_+(T-)})<0
\end{align*}
\end{proof}
Next we observe that if the constraint is satisfied at time $T$ but not strictly, a differentiability assumption is also sufficient.
\begin{proposition}\label{cor::phi}
Let $\nu$ be some real function on $[0,T]$ and $\ell(t,.)=Id(\cdot)-\nu(t)$. Assume also that one of the two following condition is satisfied
\begin{enumerate}
    \item $\partial_Z f=0$ and there exists some differentiable function $\phi:[0,T]\longrightarrow \mathbb R$ with $\phi(T)\leq 0$ such that $\nu(t)-(-e^{(T-t)\sup \partial_Y f }\E[D_t\xi^-]+e^{(T-t)\inf \partial_Y f }\E[D_t\xi^+])\leq \phi(t)$;
    \item there exists some differentiable function $\phi:[0,T]\longrightarrow \mathbb R$ with $\phi(T)\leq 0$ such that \begin{align*}
        &\nu(t)-(-e^{(T-t)\sup \partial_Y f }e^{\frac{1}{p_-(t)-1}\frac{\sup |\partial_Z f|^2}{2}(T-t)}\E[|D_t\xi^-|^{p_-(t)}]^{\frac{1}{p_-(t)}}\\
        &+e^{(T-t)\inf \partial_Y f }e^{\frac{p_+(t)}{1-p_+(t)}\frac{\sup |\partial_Z f|^2}{2}(T-t)}\E[|D_t\xi^+|^{\frac{1}{p_+(t)}}]^{p_+(t)})\leq \phi(t).
    \end{align*}
    .
\end{enumerate}
Then the conditions of Proposition \ref{th::exi} are satisfied.
\end{proposition}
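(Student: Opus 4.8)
The plan is to verify directly the two requirements appearing in Proposition~\ref{th::exi}, namely the terminal inequality $\nu(T)\leq\E[D_T\xi]$ and the upper-boundedness on $[0,T)$ of the relevant difference quotient. In the case $\partial_Z f=0$ write
\begin{equation*}
    g(t):=\nu(t)-\big(-e^{(T-t)\sup \partial_Y f }\E[D_t\xi^-]+e^{(T-t)\inf \partial_Y f }\E[D_t\xi^+]\big),
\end{equation*}
and in the case $\partial_Z f\neq 0$ let $g$ denote the analogous bracketed quantity built from $p_\pm$; in both cases the hypothesis reads $g(t)\leq\phi(t)$ for a differentiable $\phi$ with $\phi(T)\leq 0$, while the conditions of Proposition~\ref{th::exi} amount to $g(T)\leq 0$ together with the upper-boundedness of $t\mapsto g(t)/(T-t)$.

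For the boundedness I would observe that $T-t>0$ on $[0,T)$, so $g(t)/(T-t)\leq\phi(t)/(T-t)$ and it suffices to bound $\phi(t)/(T-t)$ from above. On each $[0,T-\eps]$ this quotient is continuous, hence bounded; near $T$ I would use differentiability, writing $\phi(t)=\phi(T)+\phi'(T)(t-T)+o(T-t)$, so that
\begin{equation*}
    \frac{\phi(t)}{T-t}=\frac{\phi(T)}{T-t}-\phi'(T)+\frac{o(T-t)}{T-t}.
\end{equation*}
Since $\phi(T)\leq 0$ the first term is nonpositive, whence $\limsup_{t\to T^-}\phi(t)/(T-t)\leq-\phi'(T)<\infty$. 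Combining the two regimes gives the desired upper bound on $g(t)/(T-t)$, which is exactly the boundedness hypothesis of Proposition~\ref{th::exi}.

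For the terminal condition I would evaluate the hypothesis at $t=T$, where every exponential prefactor equals $1$. In Case~1 this gives $g(T)=\nu(T)-\E[D_T\xi]\leq\phi(T)\leq 0$, i.e.\ exactly $\nu(T)\leq\E[D_T\xi]$. In Case~2 the collapse of the prefactors together with the elementary power-mean inequalities $\E[|D_T\xi^-|^{p_-}]^{1/p_-}\geq\E[D_T\xi^-]$ and $\E[|D_T\xi^+|^{1/p_+}]^{p_+}\leq\E[D_T\xi^+]$ (valid for $p_\pm>1$) yields $-\E[|D_T\xi^-|^{p_-}]^{1/p_-}+\E[|D_T\xi^+|^{1/p_+}]^{p_+}\leq\E[D_T\xi]$, so $g(T)\leq 0$ again forces $\nu(T)\leq\E[D_T\xi]$ (reading $p_\pm(T)$ through their left limits, as in the preceding proposition).

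The main obstacle, and precisely the reason $\phi$ is assumed differentiable rather than merely continuous, is the control of $\phi(t)/(T-t)$ as $t\to T$ in the borderline regime $\phi(T)=0$: a continuous $\phi$ with $\phi(T)=0$ (for instance $\phi(t)=\sqrt{T-t}$) can make the quotient blow up, whereas the first-order expansion guarantees that the potentially singular term $\phi(T)/(T-t)$ is nonpositive and the remainder is $o(1)$. Once this is in place, the compactness argument on $[0,T-\eps]$ and the power-mean inequalities are routine.
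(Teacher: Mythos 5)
Your argument for the boundedness of the difference quotient is exactly the paper's proof: dominate $g(t)/(T-t)$ by $\phi(t)/(T-t)$, use continuity of this quotient away from $T$, and near $T$ the expansion $\phi(t)/(T-t)=\phi(T)/(T-t)-\phi'(T)+o(1)$ together with $\phi(T)\leq 0$; your remark that $\phi(t)=\sqrt{T-t}$ shows mere continuity would not suffice is a nice complement. Where you go beyond the paper is the terminal requirement $\nu(T)\leq\E[D_T\xi]$ of Proposition \ref{th::exi}: the paper's one-line proof addresses only the quotient bound and silently omits this condition. Your Case 1 check (evaluate the hypothesis at $t=T$, where all exponential prefactors equal $1$) is correct and completes the argument, and the Jensen/power-mean inequalities $\E[|D_T\xi^-|^{p_-}]^{1/p_-}\geq\E[D_T\xi^-]$ and $\E[|D_T\xi^+|^{1/p_+}]^{p_+}\leq\E[D_T\xi^+]$ are the right tool for Case 2. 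The only soft spot is the one you flag in passing: in Case 2 the maps $p_\pm$ live on $[0,T)$ and the proposition assumes no continuity of $\nu$ or of the moment maps at $T$, so ``reading $p_\pm(T)$ through left limits'' presumes those limits exist and that the hypothesis inequality passes to the limit, neither of which the statement guarantees. That is a defect of the proposition as stated (inherited by the paper's proof, which ignores the terminal condition altogether) rather than of your argument, which is more thorough than the published one.
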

\begin{proof}
In both cases an upper bound is given by
\begin{align*}
    \sup_{[0,T)} \frac{\phi(t)}{T-t}<+\infty
\end{align*}
as $t\mapsto\frac{\phi(t)}{T-t}$ is continuous on $[0,T)$ and $\frac{\phi(t)}{T-t}\sim_{t\rightarrow T} \frac{\phi(T)}{T-t}-\phi'(T)<+\infty$.
\end{proof}

\begin{remark}
Reciprocally under the conditions of Proposition \ref{th::exi} we can find a $\phi$ as in Proposition \ref{cor::phi}: take $\phi(t) = C(T-t)$ with $C$ the upper bound.
\end{remark}

\begin{remark}
The same sufficient conditions hold considering more general constraints function $\ell$.
\end{remark}

\subsection{Bilateral constraints}

In the case where $\partial_Z f=0$, the same argument as in Proposition \ref{th::exi} gives the following result if we have both upper and lower linear constraints.
\begin{proposition}
Let $\overline \nu$ and $\underline \nu$ be some real functions on $[0,T]$. Assume also that $\partial_Z f=0$, $\underline \nu(T)\leq \E[D_T \xi] \leq \overline \nu(T)$ and
there exists some c\`adl\`ag function $\nu:[0,T]$ such that,
    \begin{align*}
        &\frac{1 }{s(t)}[\overline \nu(t)-(-e^{(T-t)\inf \partial_Y f }\E[D_t\xi^-]+e^{(T-t)\sup \partial_Y f }\E[D_t\xi^+]+d e^{(T-t)\sup |\partial_Y f| }(T-t))]\\
        &\geq \nu(t)\\
        &\geq \frac{1 }{i(t)}[\underline \nu(t)-(-e^{(T-t)\sup \partial_Y f }\E[D_t\xi^-]+e^{(T-t)\inf \partial_Y f }\E[D_t\xi^+]-d e^{(T-t)\sup |\partial_Y f| }(T-t))]
    \end{align*}    
    if $\nu(t)\geq 0$ and
    \begin{align*}
        &\frac{1}{i(t)}[\overline \nu(t)-(-e^{(T-t)\inf \partial_Y f }\E[D_t\xi^-]+e^{(T-t)\sup \partial_Y f }\E[D_t\xi^+]+d e^{(T-t)\sup |\partial_Y f| }(T-t))]\\
        &\geq \nu(t)\\
        &\geq \frac{1}{s(t)}[\underline \nu(t)-(-e^{(T-t)\sup \partial_Y f }\E[D_t\xi^-]+e^{(T-t)\inf \partial_Y f }\E[D_t\xi^+]-d e^{(T-t)\sup |\partial_Y f| }(T-t))]
    \end{align*}
    if $\nu(t)\leq 0$.
Then there exists a solution to \eqref{eq::bsde} which satisfies $\underline \nu(t)\leq \E[Z_t]\leq \overline \nu(t)$ for all $0\leq t\leq T$.
\end{proposition}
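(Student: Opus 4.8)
The plan is to run the construction of Proposition~\ref{th::exi}, Case~1, but to track both a lower \emph{and} an upper bound on $\E[Z_t]$ simultaneously, taking the c\`adl\`ag exponent to be the given function $\nu$. Concretely, I would set $\alpha:=\nu$, $k_s=e^{\int_0^s\alpha(u)dB_u-\frac12\int_0^s\alpha(u)^2du}$ and $dK_s=k_sds$. Since $\nu$ is c\`adl\`ag on the compact $[0,T]$ it is bounded, so $\int_0^T\nu_t^2dt<\infty$ and the lemma preceding Proposition~\ref{th::exi} yields $K\in\Ac^2$ with $D_uk_s=\mathbf 1_{u\le s}\alpha(u)k_s$. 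As $f(\cdot,0,0)+k\in\Hc^4$ and $f$ is Lipschitz, \eqref{eq::bsde} with this $K$ admits a unique pair $(Y,Z)\in\Sc^2\times\Hc^2$, so $(Y,Z,K)$ solves~\eqref{eq::bsde}; it then only remains to verify the two-sided constraint.

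With $\partial_Z f=0$ the Malliavin representation from the proof of Proposition~\ref{th::exi} reads
\[
\E[Z_t]=\E\Big[\Gamma_t^TD_t\xi+\int_t^T\Gamma_t^sD_tf(s,Y_s,Z_s)ds+\int_t^T\Gamma_t^sD_tk_sds\Big],\qquad \Gamma_t^s=e^{\int_t^s\partial_Y f(r,Y_r,Z_r)dr}.
\]
I would bound this from both sides using $e^{(s-t)\inf\partial_Y f}\le\Gamma_t^s\le e^{(s-t)\sup\partial_Y f}$, the bound $|D_tf|\le d$, and the facts $k_s\ge0$, $\E[k_s]=1$, which give $\E[\Gamma_t^sD_tk_s]=\alpha(t)\E[\Gamma_t^sk_s]$ squeezed between $\alpha(t)e^{(s-t)\inf\partial_Y f}$ and $\alpha(t)e^{(s-t)\sup\partial_Y f}$ according to the sign of $\alpha(t)$. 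Splitting $D_t\xi=D_t\xi^+-D_t\xi^-$ and integrating in $s$ gives, exactly as in Case~1, a lower bound whose $\alpha$-dependent part is $\alpha(t)^+i(t)-\alpha(t)^-s(t)$ and an upper bound whose $\alpha$-dependent part is $\alpha(t)^+s(t)-\alpha(t)^-i(t)$; write $L_0(t)$ and $U_0(t)$ for the corresponding $\alpha$-free parts, which are precisely the bracketed expressions in the statement (carrying $-d\,e^{(T-t)\sup|\partial_Y f|}(T-t)$ in the lower bound and $+d\,e^{(T-t)\sup|\partial_Y f|}(T-t)$ in the upper one).

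Finally I would substitute $\alpha=\nu$ and check the two sign regimes. When $\nu(t)\ge0$ the bounds become $L_0(t)+\nu(t)i(t)\le\E[Z_t]\le U_0(t)+\nu(t)s(t)$, so the first displayed pair of hypotheses is exactly $\nu(t)i(t)\ge\underline\nu(t)-L_0(t)$ and $\nu(t)s(t)\le\overline\nu(t)-U_0(t)$, i.e. $\underline\nu(t)\le\E[Z_t]\le\overline\nu(t)$; when $\nu(t)\le0$ the roles of $i$ and $s$ swap and the second displayed pair gives the same conclusion. At $t=T$ one has $\Gamma_T^T=1$ and the $K$-term vanishes, so $\E[Z_T]=\E[D_T\xi]$, and the endpoint hypothesis $\underline\nu(T)\le\E[D_T\xi]\le\overline\nu(T)$ closes the constraint there. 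The only genuine work, and the place to be careful, is the sign bookkeeping ensuring that the \emph{single} choice $\alpha=\nu$ enforces the lower constraint through $(i,s)$ and the upper constraint through the swapped $(s,i)$ at once; everything else is the verbatim estimate of Proposition~\ref{th::exi}.
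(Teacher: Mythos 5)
Your proposal is correct and is exactly the argument the paper intends: the paper gives no separate proof for this proposition, stating only that ``the same argument as in Proposition \ref{th::exi}'' applies, and your writeup fills in precisely that argument --- taking $\alpha=\nu$ in the exponential-martingale construction, tracking the two-sided bounds via $e^{(s-t)\inf\partial_Y f}\le\Gamma_t^s\le e^{(s-t)\sup\partial_Y f}$ together with $\E[k_s]=1$, and handling $t=T$ by the endpoint hypothesis. The sign bookkeeping ($i(t)$ versus $s(t)$ swapping according to the sign of $\nu(t)$) matches the statement's two regimes, so no gap remains.
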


\begin{remark}
Once again under continuity assumptions on $\underline \nu$, $\overline \nu$, $t\mapsto\E[D_t\xi^+]$ and $\E[D_t\xi^-]$ at $t=T$ the condition $\underline \nu(T)\leq \E[D_T \xi] \leq \overline \nu(T)$ is redundant.
\end{remark}
\comment{
\begin{theorem}\label{th::ex}
Let $u$ be some differentiable function on $[0,T]$. We suppose that $\xi\in\Dc^{1,2}$ and we write $c(t):=\E[D_t \xi]$. Also, we restrain to $l(t,.)=Id-u(t)$. Make the following assumptions:
\begin{enumerate}[(i)]
    \item if $f=0$: there exists a differentiable function $\phi$ on $[0,T]$ such that $c-u\geq \phi$ and $\phi(T)\geq 0$.
    \item if $|D_t f|<d\in\R$ for all $0\leq t\leq T$, $\partial_Z f=0$, $\partial_Y f$ bounded:
    \begin{itemize}
        \item $c(T)>u(T)$ or
        \item there exists a differentiable function $\phi$ on $[0,T]$ such that $t\mapsto -e^{(T-t)\sup \partial_Y f }\E[D_t\xi^-]+e^{(T-t)\inf \partial_Y f }\E[D_t\xi^+]-d e^{(T-t)\sup |\partial_Y f| }(T-t)-u(t)\geq \phi(t)$ for all $0\leq t\leq T$ and $\phi(T)\geq 0$.
    \end{itemize}
    \item if $|D_t f|<d\in\R$ for all $0\leq t\leq T$, $\partial_Z f$ and $\partial_Y f$ bounded, $\E[|D_t\xi^-|^{p_-(t)}]<\infty$ for some deterministic function $p_->1$ and all $t$, and $t\mapsto -e^{(T-t)\sup \partial_Y f }e^{\frac{1}{p_-(t)-1}\frac{\sup |\partial_Z f|^2}{2}(T-t)}\E[|D_t\xi^-|^{p_-(t)}]^{\frac{1}{p_-(t)}}+e^{(T-t)\inf \partial_Y f }e^{\frac{p_+(t)}{1-p_+(t)}\frac{\sup |\partial_Z f|^2}{2}(T-t)}\E[|D_t\xi^+|^{\frac{1}{p_+(t)}}]^{p_+(t)}-d e^{(T-t)\sup |\partial_Y f| }(T-t)-u(t)$ is bounded from below by some function $\phi$ differentiable on $[0,T]$ with $\phi(T)\geq 0$, for some deterministic function $p_+>1$.
\end{enumerate}
Then BSDE \eqref{eq::bsde} admits a solution $(Y,Z,K)$ which satisfies \eqref{eq::constraint} with $dK_t=e^{CB_t}dt$ and $C\in\R$.
\end{theorem}
\begin{proof}
\textbf{Case (i)}\\
From the proof of Proposition \ref{prop::no_det}, we see that 
\begin{equation*}
    \E[Z_t]-u(t) = c(t)-u(t)+\int_t^T\E[D_tk_s]ds \geq  \phi(T)+\int_t^T(\E[D_tk_s]-\phi'(s))ds\geq \int_t^T(\E[D_tk_s]-\phi'(s))ds.
\end{equation*}
We can choose $k$ such that $\E[D_tk_s]\geq \phi'(s)$ for $0\leq t\leq s\leq T$ and $K\in\Ac^2$. Indeed let $\bar C=\underset{0\leq r\leq T}{\max}\phi'(r)$ and $k_s=e^{\bar C B_s}$. Then $K\in\Ac^2$ and $\E[D_tk_s] = \bar C e^{\frac{\bar C^2s}{2}}\geq \bar C$ so $\E[Z_t]\geq u$ for all $0\leq t\leq T$.\\

\textbf{Case (ii)}\\
As $f$ is Lipschitz we get
\begin{equation*}
    D_tY_r = D_t\xi+\int_r^T(D_t f(s,Y_s,Z_s)+\partial_Y f(s,Y_s,Z_s)D_tY_s+\partial_Z f(s,Y_s,Z_s)D_tZ_s)ds -\int_r^T D_tZ_s dB_s+\int_r^T D_tk_sds
\end{equation*}
for $0\leq t\leq r\leq T$, so
\begin{equation*}
    E[Z_t] = E[D_tY_t] = \E[\Gamma_t^TD_t\xi+\int_t^T \Gamma_t^s D_tf(s,Y_s,Z_s)ds+\int_t^T\Gamma_t^s D_tk_sds]
\end{equation*}
with $\Gamma_t^T = e^{\int_t^T \partial_Y f(s,Y_s,Z_s)ds+\partial_Z f(s,Y_s,Z_s)dB_s-\frac{\partial_Z f(s,Y_s,Z_s)^2}{2}ds}$. Under the conditions on $f$ we get that
\begin{equation*}
    E[Z_t]\geq -e^{(T-t)\sup \partial_Y f }\E[D_t\xi^-]+e^{(T-t)\inf \partial_Y f }\E[D_t\xi^+]-d e^{(T-t)\sup |\partial_Y f| }(T-t) +e^{(s-t)\inf \partial_Y f }\int_t^T\E[ D_tk_s]ds
\end{equation*}
if $D_tk_s\geq 0$ a.s. for all $0\leq t\leq s\leq T$.
Taking $k_s=e^{C B_s}$ for some constant $C$, we have $K\in\Ac^2$, and a sufficient condition would be to find $C$ such that 
\begin{equation*}
    -e^{(T-t)\sup \partial_Y f }\E[D_t\xi^-]+e^{(T-t)\inf \partial_Y f }\E[D_t\xi^+]-d e^{(T-t)\sup |\partial_Y f| }(T-t) +e^{-(T-t)\sup |\partial_Y f| }\int_t^TCe^{\frac{C^2 s}{2}}ds\geq u(t)
\end{equation*}
for all $0\leq t\leq T$.
\begin{itemize}
    \item Assume $c(T)>u(T)$.
    We see that for all $C>0$,
    \begin{align*}
        E[Z_t] &\geq \E[\Gamma_t^TD_t\xi+\int_t^T \Gamma_t^s D_tf(s,Y_s,Z_s)ds]\geq \E[\Gamma_t^TD_t\xi]-d e^{(T-t)\sup \partial_Y f }(T-t)\\
        &\geq \E[-e^{(T-t)\sup \partial_Y f }D_t\xi^-+e^{(T-t)\inf \partial_Y f }D_t\xi^+]-d e^{(T-t)\sup |\partial_Y f| }(T-t)\underset{t\longrightarrow T}{\longrightarrow}E[D_T\xi]>u(T),
    \end{align*}
    by continuity of $c$ and the conditions on $f$. So by continuity of $u$ we have that $E[Z_t]>u(t)$ on some interval $[T-\epsilon,T]$ for all $C>0$.
    Notice also that we can find $C$ so that 
    \begin{equation*}
    -e^{T\sup |\partial_Y f| }\E[D_t\xi^-]+e^{-T\sup |\partial_Y f| }\E[D_t\xi^+]-d e^{T\sup |\partial_Y f| }T +e^{-T\sup |\partial_Y f| }\int_t^TCe^{\frac{C^2 s}{2}}ds\geq \max u
    \end{equation*}
    for all $0\leq t\leq T-\epsilon$, so in particular $\E[Z_t]\geq u(t)$ for all $0\leq t\leq T-\epsilon$.
    \item Assume there exists a differentiable function $\phi$ on $[0,T]$ such that $t\mapsto -e^{(T-t)\sup |\partial_Y f| }\E[D_t\xi^-]+e^{-(T-t)\sup |\partial_Y f| }\E[D_t\xi^+]-d e^{(T-t)\sup |\partial_Y f| }(T-t)-u(t)\geq \phi(t)$ for all $0\leq t\leq T$ and $\phi(T)\geq 0$.
    Notice that
    \begin{align*}
        &\E[Z_t]-u(t)\\&\geq -e^{(T-t)\sup \partial_Y f }\E[D_t\xi^-]+e^{(T-t)\inf \partial_Y f }\E[D_t\xi^+]-d e^{(T-t)\sup |\partial_Y f| }(T-t) \\
        &\hspace{1cm}+e^{-T\sup |\partial_Y f| }\int_t^TCe^{\frac{C^2 s}{2}}ds-u(t)\\
        &\geq \phi(t)+e^{-T\sup |\partial_Y f| }\int_t^TCe^{\frac{C^2 s}{2}}ds\\
        &\geq \phi(T)-\int_t^T \phi'(s)ds+e^{-T\sup |\partial_Y f| }\int_t^TCe^{\frac{C^2 s}{2}}ds.
    \end{align*}
    As $\phi(T)\geq 0$ we have
    \begin{align*}
        &\E[Z_t]-u(t)\geq \int_t^T (-\phi'(s) + e^{-T\sup |\partial_Y f| }Ce^{\frac{C^2 s}{2}})ds,
    \end{align*}
    so $C\geq e^{T\sup |\partial_Y f| }\max \phi'$ is sufficient.
\end{itemize}

\textbf{Case (iii)}

The same argument as before leads to
\begin{align*}
    &\E[Z_t]-u(t)\\
    &\geq -e^{(T-t)\sup \partial_Y f }\E[\tilde\Gamma_t^T D_t\xi^-]+e^{(T-t)\inf \partial_Y f }\E[\tilde\Gamma_t^T D_t\xi^+]-d e^{(T-t)\sup |\partial_Y f| }(T-t) \\
    &\hspace{1cm}+e^{-T\sup |\partial_Y f| }\int_t^T\E[\tilde\Gamma_t^sD_tk_s]ds-u(t)
\end{align*}
with $\tilde\Gamma_t^s = e^{\int_t^s \partial_Z f(r,Y_r,Z_r)dB_r-\frac{\partial_Z f(r,Y_r,Z_r)^2}{2}dr}$.\\
Using the reverse Hölder inequality and lemma \ref{lemma_const} we get $$\E[\tilde\Gamma_t^T D_t\xi^+]\geq \E[|\tilde\Gamma_t^T|^\frac{1}{1-p}]^{1-p}\E[|D_t\xi^+|^{\frac{1}{p}}]^p\geq e^{\frac{p}{1-p}\frac{\sup |\partial_Z f|^2}{2}(T-t)}\E[|D_t\xi^+|^{\frac{1}{p}}]^p$$
and 
$$\E[\tilde\Gamma_t^s D_tk_s]\geq \E[|\tilde\Gamma_t^s|^\frac{1}{1-p}]^{1-p}\E[|D_tk_s|^{\frac{1}{p}}]^p\geq e^{\frac{p}{1-p}\frac{\sup |\partial_Z f|^2}{2}(s-t)}Ce^{\frac{C^2}{2p}}$$
for any $p>1$. The Hölder inequality gives 
$$\E[\tilde\Gamma_t^T D_t\xi^-]\leq \E[|\tilde\Gamma_t^T|^\frac{q}{q-1}]^{\frac{1-q}{q}}\E[|D_t\xi^-|^{q}]^{\frac{1}{q}}\leq e^{\frac{1}{q-1}\frac{\sup |\partial_Z f|^2}{2}(T-t)}\E[|D_t\xi^-|^{q}]^{\frac{1}{q}}.$$
As a consequence, we have
\begin{align*}
    &\E[Z_t]-u(t)\\
    &\geq -e^{(T-t)\sup \partial_Y f }e^{\frac{1}{p_-(t)-1}\frac{\sup |\partial_Z f|^2}{2}(T-t)}\E[|D_t\xi^-|^{p_-(t)}]^{\frac{1}{p_-(t)}}+e^{(T-t)\inf \partial_Y f }e^{\frac{p_+(t)}{1-p_+(t)}\frac{\sup |\partial_Z f|^2}{2}(T-t)}\E[|D_t\xi^+|^{\frac{1}{p_+(t)}}]^{p_+(t)}\\
    &\hspace{1cm}-d e^{(T-t)\sup |\partial_Y f| }(T-t)
    +e^{-T\sup |\partial_Y f| }\int_t^Te^{\frac{p}{1-p}\frac{\sup |\partial_Z f|^2}{2}(s-t)}Ce^{\frac{C^2}{2p}}ds-u(t)\\
    &\geq \phi(T)-\int_t^T \phi'(s)ds+e^{-T\sup |\partial_Y f| }e^{\frac{p}{1-p}\frac{\sup |\partial_Z f|^2}{2}T}\int_t^TCe^{\frac{C^2 s}{2p}}ds
\end{align*}
so $C\geq e^{T\sup |\partial_Y f| }e^{\frac{p}{p-1}\frac{\sup |\partial_Z f|^2}{2}T}\max \phi'$ is sufficient.
\end{proof}}

\subsection{On minimal super-solutions}\label{section:minimal}

\subsubsection{Characterization}
The following theorem is inspired by the discussion in Section 5 of \cite{briandElieHu} and formalizes the intuitive idea that a constraint in expectation makes the problem time inconsistent. We show in particular that a minimal supersolution is necessarily a solution. We prove the result in the case of linear constraints, but the theorem can be extended to concave constraints using similar arguments to those in Theorem \ref{th::thex}.

\begin{theorem}
Let the assumptions set out in Proposition \ref{th::exi} be satisfied and denote by $(Y,Z,K)$ a solution of \eqref{eq::bsde} satisfying the constraint \eqref{eq::constraint}. Then the following properties are equivalent
\begin{enumerate}[(i)]
    \item $(Y,Z,K)$ is a minimal supersolution;
    \item $K_T=0$ a.s..
\end{enumerate}
\end{theorem}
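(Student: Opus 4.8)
The plan is to relate every supersolution to the unique solution $(\bar Y,\bar Z)\in\Sc^2\times\Hc^2$ of the \emph{unreflected} BSDE \eqref{eq::bsde} obtained by setting $K\equiv 0$, and to show that the reflection can only raise the value $Y_0$, the minimal possible value $\bar Y_0$ being attained exactly when $K\equiv 0$.

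First I would record the basic comparison. Given any supersolution $(Y,Z,K)$, write $\delta Y:=Y-\bar Y$ and $\delta Z:=Z-\bar Z$. Linearising the Lipschitz generator along the two solutions produces bounded coefficients $a_s,b_s$ with $|a_s|\leq\|\partial_Y f\|_\infty$ and $|b_s|\leq\|\partial_Z f\|_\infty$ (finite under Assumption $(\mathbf A)$), so that $\delta Y$ solves a linear BSDE with zero terminal value and driver $dK_s$. Its explicit representation is $\delta Y_t=\E_t[\int_t^T\Gamma_t^s\,dK_s]$, with $\Gamma_t^s$ exactly as in the statement. As $\Gamma_t^s>0$ and $K$ is non-decreasing, $\delta Y_t\geq 0$ for all $t$; moreover $\delta Y_0=\E[\int_0^T\Gamma_0^s\,dK_s]=0$ holds if and only if $K\equiv 0$. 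This settles (ii)$\Rightarrow$(i): if $K_T=0$ then $K\equiv 0$, hence $(Y,Z)=(\bar Y,\bar Z)$, and applying the same representation to any other supersolution $(Y',Z',K')$ gives $Y_t=\bar Y_t\leq Y'_t$ for all $t$, i.e. $(Y,Z,K)$ is minimal.

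For (i)$\Rightarrow$(ii) the crux is to show that $\inf Y_0=\bar Y_0$, the infimum ranging over all supersolutions; the comparison above is the lower bound. For the matching upper bound I would, for each $a>0$, build a supersolution with reflection $dK^a_s=\beta_a\,e^{aB_s-\frac12 a^2 s}\,ds$, choosing $\beta_a>0$ minimal so that \eqref{eq::constraint} still holds. Since $D_tk^a_s=\beta_a a\,e^{aB_s-\frac12a^2 s}\geq 0$, the reflection only increases $\E[Z_t]$ through $\int_t^T\Gamma_t^s D_tk^a_s\,ds$; bounding this term from below exactly as in Proposition \ref{th::exi} (via Lemma \ref{lemma_const} when $\partial_Z f\neq 0$) and using the boundedness assumption of that proposition shows the constraint can be met with $\beta_a=O(1/a)$. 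Each $K^a$ lies in $\Ac^2$, and the stability bound $0\leq Y^a_0-\bar Y_0=\E[\int_0^T\Gamma_0^s\,dK^a_s]\leq C\,\E[K^a_T]=C\,\beta_a T$ forces $Y^a_0\to\bar Y_0$ as $a\to\infty$.

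Combining the two bounds, a minimal supersolution $(Y^*,Z^*,K^*)$ satisfies $\bar Y_0\leq Y^*_0\leq Y^a_0\to\bar Y_0$, hence $Y^*_0=\bar Y_0$, i.e. $\E[\int_0^T\Gamma_0^s\,dK^*_s]=0$; since $\Gamma_0^s>0$ and $dK^*\geq 0$ this yields $K^*\equiv 0$ and in particular $K^*_T=0$, which is (i)$\Rightarrow$(ii). The routine points --- the linear representation of $\delta Y$, membership $K^a\in\Ac^2$ for fixed $a$, and the precise calibration of $\beta_a$ --- reuse the computations already performed for Proposition \ref{th::exi}. The genuine difficulty, and the whole content of the time-inconsistency phenomenon of \cite{briandElieHu}, is the family of the third paragraph: because the constraint must hold only in expectation, a reflection with small mean but large Malliavin derivative (large $a$) suffices, so that $\bar Y_0$ is the infimum of the values; it is attained (by $K\equiv 0$) precisely when the unreflected solution already meets \eqref{eq::constraint}, and never otherwise.
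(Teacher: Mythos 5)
Your direction (ii)$\Rightarrow$(i) and the linear representation $\delta Y_t=\E_t[\int_t^T\Gamma_t^s\,dK_s]$ are fine and match the paper's one-line appeal to comparison. The gap is in (i)$\Rightarrow$(ii), at the step $0\leq Y^a_0-\bar Y_0=\E[\int_0^T\Gamma_0^s\,dK^a_s]\leq C\,\E[K^a_T]$. That inequality requires $\Gamma_0^s$ to be bounded, which holds only when $\partial_Z f=0$. When $\partial_Z f\neq 0$, the weight $\Gamma_0^s$ contains the stochastic exponential of $\int_0^s b_u\,dB_u$ and is (in the worst case) exponentially positively correlated with your density $k^a_s=\beta_a e^{aB_s-\frac{a^2s}{2}}$: writing $e^{\int_0^s b_u dB_u-\frac12\int_0^s b_u^2 du}\,e^{aB_s-\frac{a^2 s}{2}}=e^{a\int_0^s b_u du}\,\mathcal{E}_s$ with $\mathcal{E}$ a unit-mean exponential martingale shows that for $b\equiv M:=\sup|\partial_Z f|>0$ (e.g.\ $\partial_Z f$ constant) one has $\E[\Gamma_0^s k^a_s]=\beta_a e^{O(1)}e^{aMs}$, not $O(\beta_a)$. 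Meanwhile the constraint near $t=T$ forces $\beta_a\gtrsim 1/a$: the gain there behaves like $\frac{\beta_a}{M}\bigl(e^{aM(T-t)}-1\bigr)\approx a\beta_a(T-t)$ and must dominate a deficit of order $T-t$ (this is exactly what the boundedness hypothesis of Proposition \ref{th::exi} gives). Combining, $Y^a_0-\bar Y_0\gtrsim \frac{1}{a}\cdot\frac{e^{aMT}}{aM}\longrightarrow+\infty$, so your family's initial values diverge rather than converge to $\bar Y_0$, and no recalibration of $\beta_a$ repairs this: making $\beta_a$ smaller violates the constraint near $T$. Hence your argument proves the theorem only under the first set of hypotheses of Proposition \ref{th::exi} ($\partial_Z f=0$), whereas the statement also covers the $Z$-dependent driver.

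The paper's proof avoids identifying $\inf Y_0$ altogether, and this is the idea you are missing. Given a supersolution with $\P(K_T>0)>0$, it picks $\varepsilon$ with $\P(\Omega_+)>0$ where $\Omega_+=\{\E[K_T-K_\varepsilon|\Fc_\varepsilon]>0\}$, and builds a competing reflection whose density after time $\varepsilon$ is multiplied by a random variable $X^*$ that vanishes exactly on the event $\Omega^{\kappa^*}_\varepsilon=\{\sup_{r\leq\varepsilon}|B_r|\leq\kappa^*\}$; the mean constraint is restored by loading (arbitrarily steep, $\tilde C\to\infty$) reflection mass onto the complementary, large-path event --- possible precisely because the constraint holds only in expectation. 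On $\Omega_+\cap\Omega^{\kappa^*}_\varepsilon$ the competitor then carries no future reflection while the given supersolution does, so the linear representation yields $\tilde Y_\varepsilon<Y_\varepsilon$ there with positive probability, contradicting minimality. This local redistribution of the reflection in path space, as opposed to your global comparison of values at $t=0$, is what sidesteps the exponential cost $e^{aMT}$ that destroys your construction: the competitor is never required to be close to $\bar Y$ everywhere, only strictly below $Y$ somewhere.
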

\begin{proof}
\textbf{(ii)$\implies$(i)}
This is a direct consequence of the comparison result for Lipschitz BSDEs, see \cite{elkpengquenez}.\\

\textbf{(i)$\implies$(ii)}
We proceed by contradiction by supposing that $\P(K_T>0)>0$. As $K_0=0$ and $\{K_T>0\}=\cup_{n=1}^{+\infty}\{K_T-K_{\frac{T}{n}}>0\}$, there is some $\varepsilon>0$ such that $\P(K_T-K_{\varepsilon}>0)>0$. Therefore $\E[\E[K_T-K_\varepsilon|\Fc_\varepsilon]]>0$ so that $\P(\E[K_T-K_\epsilon|\Fc_\epsilon]>0)>0$. We thus set $\Omega_+ = \{\E[K_T-K_\epsilon|\Fc_\epsilon]>0\}$.\\

Let $k_t\geq 0$ and $d\tilde K_t=k_t dt$. We aim at choosing $k_t$ so that $\tilde K\in\Ac^2$ and $k\in\Dc^{1,2}$. Fix $(\tilde Y,\tilde Z)$ so that $(\tilde Y,\tilde Z,\tilde K)$ is the unique solution to \eqref{eq::bsde} with $\tilde K$, using for example existence results in \cite{elkpengquenez}. Note that $f(t,\omega,0,0)+k_t\in\Hc^4$, enabling us to differentiate the BSDE \eqref{eq::bsde}. Following the same argument as in Proposition \ref{th::exi} we get
\begin{align*}
 \E[\tilde Z_t]\geq& -e^{(T-t)\sup \partial_Y f }\E[\tilde\Gamma_t^T D_t\xi^-]+e^{(T-t)\inf \partial_Y f }\E[\tilde\Gamma_t^T D_t\xi^+]-d e^{(T-t)\sup |\partial_Y f| }(T-t) \\
    &+\int_t^T\E[\Gamma_t^sD_tk_s]ds.
\end{align*}\\
Let $\kappa\geq 0$ and $\Omega^\kappa_\varepsilon$ be the event defined by
\[
\Omega^\kappa_\varepsilon:= \{ \lambda\in  C([0,T]),\; \sup_{0\leq t\leq \varepsilon} |\lambda(t)|\leq \kappa \}.
\]
Note that there is some $\kappa^*$ such that $0<\P(\Omega_+\cap \Omega^{\kappa^*}_\varepsilon)<1$. Let $p\geq 1$, $\tilde C>0$ and choose
\begin{align*}
    k_t = e^{\tilde C B_t-\frac{\tilde C^2 t^2}{2p}}\mathbf{1}_{t< \varepsilon} + e^{\tilde C B_t-\frac{C^2 t^2}{2p}}\mathbf{1}_{t\geq \varepsilon}X^*,
\end{align*}
where $X^*$ is a real random variable defined by 
\begin{align*}
    X^*(\omega)=(\kappa^* \vee \sup\{\kappa, \omega\in\Omega^\kappa_\varepsilon\})\wedge (\kappa^*+1)-\kappa^*=(\kappa^* \vee \sup\{| \omega_r|,0\leq r\leq \varepsilon\})\wedge (\kappa^*+1)-\kappa^*.
\end{align*}
Since $\{\omega\in\Omega^\kappa_\varepsilon\}$ is $\Fc_\varepsilon$-measurable for all $\kappa\in[0,+\infty)$, we obtain $\tilde K\in\Ac^2$. We begin to show that $Y_\varepsilon (\omega)>\tilde Y_\varepsilon(\omega)$ for $\omega\in \Omega_+\cap \Omega_\varepsilon^{\kappa^*}$. Following the argument of \cite[Theorem 2.5]{elkpengquenez} and since $Y_T-\tilde Y_T=0$, we have 
\begin{align*}
    Y_t-\tilde Y_t &= \int_t^T [f(s,Y_s,Z_s)-f(s,\tilde Y_s,\tilde Z_s)]ds-\int_t^T(Z_s-\tilde Z_s)dB_s+(K_T-K_t)-(\tilde K_T-\tilde K_t)\\
    &= \int_t^T[\Delta_y f(s)(Y_s-\tilde Y_s)+\Delta_z f(s)(Z_s-\tilde Z_s)]ds-\int_t^T(Z_s-\tilde Z_s)dB_s+(K_T-K_t)-(\tilde K_T-\tilde K_t)
\end{align*}
where $\Delta_y f(s)=(f(s,Y_s,Z_s)-f(s,\tilde Y_s,Z_s))/(Y_s-\tilde Y_s)$ if $Y_s-\tilde Y_s\neq 0$, and $\Delta_y f(s)=0$ otherwise, and $\Delta_z f(s)=(f(s,\tilde Y_s,Z_s)-f(s,\tilde Y_s,\tilde Z_s))/(Z_s-\tilde Z_s)$ if $Z_s-\tilde Z_s\neq 0$, and $\Delta_Z f(s)=0$ otherwise. By the assumptions on $f$, $\Delta_y f$ and $\Delta_z f$ are bounded so
\begin{align*}
    Y_\epsilon-\tilde Y_\epsilon = \E[\int_\epsilon^T e^{\int_t^s(\Delta_y f(u)-\frac{1}{2}|\Delta_z f(u)|^2)du+\int_t^s\Delta_z f (u)dB_u}(dK_s-d\tilde K_s)|\Fc_\epsilon].
\end{align*}
Let $\omega\in\Omega_+\cap\Omega^{\kappa^*}_\varepsilon$. Then $\tilde K_T(\omega)=\tilde K_\epsilon(\omega)$ by construction. Then
$$Y_\epsilon(\omega)-\tilde Y_\epsilon(\omega) =\E[\int_\epsilon^T e^{\int_t^s(\Delta f_y(u)-\frac{1}{2}\Delta f_z(u))du+\int_t^s\Delta f_z (u)dB_u}dK_s|\Fc_\epsilon](\omega).$$
As $Y_\epsilon(\omega)-\tilde Y_\epsilon(\omega)\leq 0$ implies $K_T(\tilde \omega)-K_\epsilon(\tilde \omega)=0$ for almost all $\tilde \omega\in \Omega$ such that $\tilde \omega(s)=\omega(s)$ for $0\leq s\leq \epsilon$, so $\E[K_T-K_\epsilon|\Fc_\epsilon](\omega)=0$ which is impossible given the definition of $\Omega_+$. Consequently,  $\tilde Y_\epsilon(\omega)<Y_\epsilon(\omega)$ for all $\omega\in\Omega_+\cap\Omega^{\kappa^*}_\varepsilon$ and any choice of $\tilde C$.\\

It remains to show that we can take some $\tilde C$ so that $\E[\tilde Z_t] \geq \nu(t)$, to get a supersolution $(\tilde Y, \tilde Z, \tilde K)$ such that $\P(\tilde Y_\epsilon<Y_\epsilon)\geq \P(\Omega_+\cap \Omega_{\kappa^*})>0$, to contradict that $Y$ is a minimal supersolution.\\

For $t\leq s<\varepsilon$, we have $\E[D_tk_s]=\tilde C$ with $p=1$ and 
\begin{align*}
    &\E[e^{\int_t^s \partial_Y f(r,\bar Y_r,\bar Z_r)dr +\partial_Z f(r,\bar Y_r,\bar Z_r)dB_r-\frac{\partial_Z f(r,\bar Y_r,\bar Z_r)^2}{2}dr}D_tk_s]\\
    &\geq e^{-T|\sup\partial_Y f|}e^{\frac{p}{1-p}\frac{\sup |\partial_Z f|^2}{2}T}\tilde C
\end{align*}
with $p>1$, using the reverse Hölder inequality.\\
For $t\vee \varepsilon\leq s$, using \cite{nualart2006malliavin} Section 2.1.7 and \cite{nakatsu} Lemma 2,\footnote{The Assumption $(iv)$ in this Lemma is not verified in our case where $\hat X=|B|$ but the proof of this lemma still works because, for almost all $r$ fixed, $s\mapsto D_r\hat X_s$ is constant on a small interval around the argsup of $X$.} we have $\mathbf{1}_{s\geq \epsilon}X^*(\omega)\in\Dc^{1,2}$ and
\begin{align*}
    D_t\mathbf{1}_{s\geq \epsilon}X^*(\omega)&=\mathbf{1}_{s\geq \varepsilon}\mathbf{1}_{\kappa^*<\sup\{|\omega_r|,0\leq r\leq \varepsilon\}<\kappa^*+1}\\
    &\hspace{1em}\times\mathbf{1}_{t\leq \argmax|\omega_r|,0\leq r\leq \varepsilon}(\mathbf{1}_{\argmax|\omega|=\argmax \omega}-\mathbf{1}_{\argmax|\omega|=\argmin \omega})\\
    &\geq -1.
\end{align*}
Hence, for any $0\leq t\vee \varepsilon\leq s\leq T$ we get for $p=1$
\begin{align*}
    \E[D_tk_s] &= \E[D_t \big(e^{\tilde CB_s-\frac{\tilde C^2 s^2}{2}}X^*\big)]\\
    &\geq \tilde C\E[e^{\tilde CB_s-\frac{\tilde C^2 s^2}{2}}\mathbf{1}_{\omega\not\in\Omega^{\kappa^*+1}_\varepsilon}]-\E[e^{\tilde CB_s-\frac{\tilde C^2 s^2}{2}}]\\
    &=\tilde C\E[e^{\tilde CB_s-\frac{\tilde C^2 s^2}{2}}\mathbf{1}_{\sup\{|B_r|, 0\leq r\leq \epsilon\}> \kappa^*+1}]-1\\
    &=\tilde C\P(\sup\{|B_r-\tilde C r|, 0\leq r\leq \varepsilon\}> \kappa^*+1)-1\\
    &\geq \tilde C\P(B_\varepsilon<\tilde C\varepsilon-\kappa^*-1)-1\underset{\tilde C\longrightarrow+\infty}{\longrightarrow} +\infty
\end{align*}
 and for $p>1$ we get
\begin{align*}
    &\E[e^{\int_t^s \partial_Y f(r,\bar Y_r,\bar Z_r)dr +\partial_Z f(r,\bar Y_r,\bar Z_r)dB_r-\frac{\partial_Z f(r,\bar Y_r,\bar Z_r)^2}{2p}dr}D_tk_s]\\
    &= \E[e^{\int_t^s \partial_Y f(r,\bar Y_r,\bar Z_r)dr +\partial_Z f(r,\bar Y_r,\bar Z_r)dB_r-\frac{\partial_Z f(r,\bar Y_r,\bar Z_r)^2}{2}dr}D_t e^{\tilde CB_s-\frac{\tilde C^2 s^2}{2p}}X^*(w)]\\
    &\geq e^{-T|\sup\partial_Y f|}\tilde C\E[e^{\int_t^s \partial_Z f(r,\bar Y_r,\bar Z_r)dB_r-\frac{\partial_Z f(r,\bar Y_r,\bar Z_r)^2}{2}dr}e^{\tilde CB_s-\frac{\tilde C^2 s^2}{2p}}\mathbf{1}_{w\not\in\Omega^{\kappa^*+1}_\varepsilon}]\\
    &-e^{T|\sup\partial_Y f|}\E[e^{\int_t^s \partial_Z f(r,\bar Y_r,\bar Z_r)dB_r-\frac{\partial_Z f(r,\bar Y_r,\bar Z_r)^2}{2}dr}e^{\tilde CB_s-\frac{\tilde C^2 s^2}{2p}}\mathbf{1}_{w\not\in\Omega^{\kappa^*}_\varepsilon}\mathbf{1}_{w\in\Omega^{\kappa^*+1}_\varepsilon}]\\
    &\geq e^{-T|\sup\partial_Y f|}\tilde C\E[e^{\int_t^s \partial_Z f(r,\bar Y_r,\bar Z_r)dB_r-\frac{\partial_Z f(r,\bar Y_r,\bar Z_r)^2}{2}dr}e^{\tilde CB_s-\frac{\tilde C^2 s^2}{2p}}\mathbf{1}_{\sup\{|B_r|, 0\leq r\leq \varepsilon\}> \kappa^*+1}]-e^{T|\sup\partial_Y f|}e^{\tilde C(\kappa^*+1)-\frac{\tilde C^2s^2}{2p}}\\
    &\geq e^{-T|\sup\partial_Y f|}e^{\frac{p}{1-p}\frac{\sup |\partial_Z f|^2}{2}(s-t)}\tilde C\P(\sup\{|B_r-\frac{\tilde C}{p}r|, 0\leq r\leq \varepsilon\}> \kappa^*+1)^p-e^{T|\sup\partial_Y f|}e^{\tilde C(\kappa^*+1)-\frac{\tilde C^2s^2}{2p}}\\
    &\geq e^{-T|\sup\partial_Y f|}e^{\frac{p}{1-p}\frac{\sup |\partial_Z f|^2}{2}T}\tilde C\P\big(B_\varepsilon<\frac{\tilde C}{p}\varepsilon-\kappa^*-1\big)^p-e^{T|\sup\partial_Y f|}e^{\tilde C(\kappa^*+1)-\frac{\tilde C^2s^2}{2p}}\\
    &\underset{\tilde C\longrightarrow+\infty}{\longrightarrow} +\infty
\end{align*}
 by using the reverse Hölder inequality.
Thus we can conclude, as we can write
\begin{align*}
 \E[\tilde Z_t]\geq& -e^{(T-t)\sup \partial_Y f }\E[\tilde\Gamma_t^T D_t\xi^-]+e^{(T-t)\inf \partial_Y f }\E[\tilde\Gamma_t^T D_t\xi^+]-d e^{(T-t)\sup |\partial_Y f| }(T-t) \\
    &+(T-t)c(\tilde C)
\end{align*}
for some function $c:\R\rightarrow\R$ with $c(\tilde C)\underset{\tilde C\rightarrow+\infty}{\rightarrow}+\infty$, and apply the same argument as in Proposition \ref{th::exi}.
\end{proof}

This result builds on some ideas of \cite{briandElieHu} on the time-inconsistency of problems with mean constraints. To obtain the existence of a minimal super-solution, stronger constraints are needed, for instance almost sure constraints, see \cite{bouchard2018bsde}, \cite{Peng1999MonotonicLT}. In the case with mean constraints on $Y$, another possibility is to restrain to a deterministic process $K$, but for constraints in $Z$ such super-solutions may not exist, and we were unable to prove the existence of a minimal one if at least one super-solution exists.

\subsubsection{Application: replication under $\beta$ law constraints}\label{sec::app}

Consider a stock market endowed with a bond with a deterministic interest rate $r$ and a stock with price dynamics given by
\begin{align*}
    dS_t=S_t(\mu_tdt+\sigma_tdB_t), 0\leq t\leq T,
\end{align*}
where the drift $\mu$ and the volatility $\sigma$ are two square integrable  predictable processes, such that $\sigma$ is lower bounded by some positive constant. Given an initial wealth $x$ and a square integrable claim $\xi$ at time $T$, a trader is looking for a hedging price. His wealth process is driven by a consumption-investment strategy $(\pi,K)$ with dynamics
\begin{align*}
    dX_t^{x,\pi,K} = r_tX_t^{x,\pi,K}dt+(\mu_t-r_t)\pi_tdt+\pi_t\sigma_tdB_t-dK_t.
\end{align*}
The admissible wealth processes satisfy the constraint
\begin{align*}
    \E[\pi_t]\geq \beta_t
\end{align*}
for some deterministic lower bound $\beta$ which penalizes portfolios which are too often contrarian but also allows for very contrarian positions in rare circumstances. The results of the paper show that there are admissible strategies, but there may not exist one with a deterministic investment process $K$. In addition the only strategy that might be optimal (in the sense that it is the preferred strategy at all time) if it is admissible is the one with no investment.

\section*{Acknowledgements}
The authors gratefully acknowledge the financial support of the ERC Grant 679836 Staqamof,
the Chaires Analytics and Models for Regulation, and Financial Risk. 
\bibliographystyle{apalike}
\bibliography{biblio.bib}

\appendix

\end{document}